\newenvironment{acknowledgements}{{\bf Acknowledgments\ }}{}{}
\newtheorem{thm}{Theorem}
\newtheorem{theorem}[thm]{Theorem}
\newtheorem{corollary}[thm]{Corollary}
\newtheorem{lemma}[thm]{Lemma}
\theoremstyle{definition}
\renewcommand{\phi}{\varphi}
\renewcommand{\rho}{\varrho}
\newcommand{\C}{\mathbb C}
\newcommand{\R}{\mathbb R}
\newcommand{\N}{\mathbb N}
\newcommand{\LP}{\mathrm L}
\newcommand{\sS}{\mathscr S}
\newcommand {\schwartz}{\sS}
\newcommand{\ov}{\overline}                           
\newcommand{\menge}[2]{\left\{#1\ : \ #2\right\}} 
\newcommand{\norm}[1]{\left\lVert#1\right\rVert}     
\newcommand{\betrag}[1]{\left\lvert#1\right\rvert}    
\newcommand {\spd}[2]{\langle#1\vert#2\rangle}    
\newcommand {\dil}{D}   
\newcommand{\famdil }{\{\dil_t\}_t}
\newcommand{\me}{{-1}}
\newcommand{\halb}{\ensuremath{{\scriptstyle\frac{1}{2}}}}
\newcommand {\GL}{\mathrm{GL}}          
\newcommand {\mal}{\cdot}              
\newcommand {\krng}{\circ}             
\newcommand {\falt}{\ast}
\newcommand {\jap}[1]{ \left< #1 \right> }
\newcommand{\dht}{\,\frac{\mathrm dt}{t}}
\newcommand{\folgt}{\Rightarrow}
\newcommand {\gdw} {\ \Leftrightarrow\ }
\newcommand {\dd} {=}         
\newcommand{\leb}{{\mu}}        
\newcommand{\diff}{\ \mathrm d}
\newcommand{\bary}{\bar y}
\newcommand{\kq}{,\quad}
\newcommand{\inv}{^{-1}}
\DeclareMathOperator{\supp}{supp} 
\DeclareMathOperator{\id}{Id} 
\DeclareMathOperator{\im}{Im} 
\DeclareMathOperator{\aut}{Aut}  
\DeclareMathOperator{\trace}{trace}
\DeclareMathOperator{\deter}{det}
\newcommand{\buu}{\tilde B}
\newcommand{\kon}{k}
\newcommand{\laplace}{\Delta}
\newcommand{\ort}[1]{O_{#1}}
\newcommand{\ortp}{O_{t}}
\newcommand{\ortn}{O_{-t}}
\newcommand{\ortps}{O_{s}}
\begin{document}
\title{Homogeneous spaces adapted to singular integral operators involving rotations}
\author{H. F. Bloch}
\date{}
\maketitle
\begin{abstract}
Calder\'on-Zygmund decompositions of functions have been used to prove weak-type (1,1) boundedness of singular integral operators. In many examples, the decomposition is done with respect to a family of balls that corresponds to some family of dilations. 
We study singular integral operators $T$ that require more particular families of balls,  providing new spaces of homogeneous  type.
Rotations play a decisive role in the construction of these balls.
Boundedness of $T$ can then be shown via Calder\'on-Zygmund decompositions with respect to these spaces of homogeneous type.
We prove weak-type (1,1) and $\LP^p$ estimates for operators $T$ acting on $\LP^p(G)$, where $G$ is a homogeneous Lie group.
Our results apply to the setting where the underlying group is the Heisenberg group and the rotations are symplectic automorphisms. They also apply to operators that arise from some hydrodynamical problem involving rotations.
\end{abstract}

\section{Introduction and main result} \label{intro}
{\em An example where G is abelian. }
In their paper  on a "singular 'winding' integral operator",
Farwig, Hishida and M\"uller \cite{farwig} considered the following problem.
A rigid body rotating with  fixed angular velocity is surrounded by an incompressible viscous fluid. This situation can be described by  the Navier-Stokes equations and  the assumption that the velocity $u$ of the fluid vanishes at infinity
and is equal to the local velocity of the body at its surface. 
While estimating $\laplace u$, an integral operator came into play. We consider a similar operator in a setting where
$G=\R^2$. Then $\mathrm d\leb=\mathrm dx$ is a Haar measure on $G$. We denote
 rotations in the plane  by
$ \ort{t}(x)\dd \left(
\begin{matrix}
\cos t & -\sin t\\
\sin t &\cos t
\end{matrix}
\right)\cdot x$, where $t\in\R$. A family of dilations is defined by $\dil_r(x)\dd r^{\frac{1}{2}}\cdot x$, where $r>0$.
Let $E(x)=e^{-\betrag{x}^2}$ and $E_r(x)=\frac{1}{r} E\krng \dil_{r^\me}(x)=\frac{1}{r}E(\frac{x}{\sqrt r}), \ x\in\R^2$.
\begin{figure*}
\includegraphics[width=0.6\textwidth]{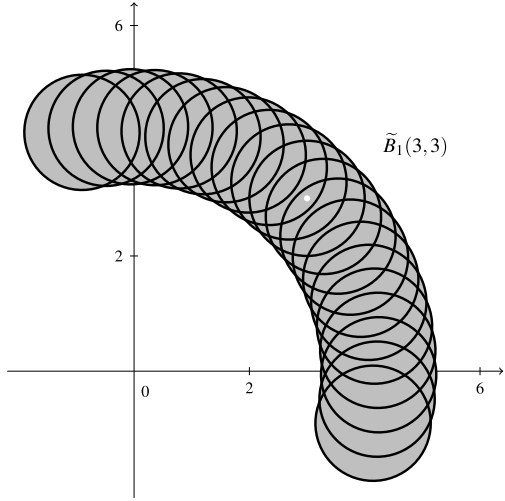}
\caption{The quasi-ball $\buu$ of radius $1$ and center $(3,3)$.}
\label{fig:1}      
\end{figure*}
We introduce a linear operator by setting
\begin{equation*}
 (Tf)(x) \dd \int_0^\infty (\laplace E_t )\ast(f\krng \ortp)(x) \,\mathrm dt,
 \end{equation*}
where $\laplace=\partial_1^2+\partial_2^2$, $f\in\schwartz(\R^2)$ and 
$ (f\ast g)(x)\dd\int_{\R^2} f(y)g(x-y)\,\mathrm dy$.
 Setting  $\psi=\laplace E$ and $\psi_r\dd \frac{1}{r}\psi\krng \dil_{r^\me}$, we obtain
\begin{equation}\label{TR2}
 (Tf)(x) = \int_0^\infty \psi_t\ast(f\krng \ortp)(x) \dht\kq f\in\schwartz,\ x\in\R^2.
 \end{equation}
Note that $\int \psi_r(x)\,\mathrm dx=0$, which implies some cancellation when smooth functions are convoluted with $\psi_t$ and $t$ is small. This cancellations suffices to make the integral in \eqref{TR2} converge.
The arguments used in \cite{farwig} 
suggest that  $T$ can be extended to a bounded linear operator on $\mathrm L^p$ if $1<p<\infty$. The operator $T$ is said to be of {\em weak type (1,1)} or bounded  from $\LP^1$ to $\LP^{1,\infty}$, if  $\leb\left(\menge{x}{\betrag{Tf(x)}>\lambda}\right)\le C\frac{\norm{f}_1}{\lambda}$ for all $\lambda>0$ and all $f$. 
So far, we had no hints on whether
$T$ is of weak type (1,1). We will show that this is indeed the case. The proof will rely on quasi-balls $\buu$ as the one depicted in
Figure \ref{fig:1}.
More precisely, we define the quasi-balls  $\buu_r$ as unions of Euclidean balls of radius $r^{\frac{1}{2}}$ by
\[  \buu_r(z)\dd  \bigcup_{-r\le s\le r}\menge{x\in\R^2}{\norm{x}_2<r^\frac{1}{2}} + \ort{s}(z),\quad z\in\R^2, r>0.  \]
{\em An example where $G$ is nonabelian. }
The set $\C^2\times\R$ together with the product 
\[ [(u_1,v_1,s_1),(u_2,v_2,s_2)]=(0,0,\im(\overline{u_1} u_2+\overline{v_1} v_2)) \]
 is a real Lie algebra. Furthermore, the product
\[ x\cdot y =x+y+\frac{1}{2}[x,y]\kq x,\, y \in \C^2\times\R \]
defines a real Lie group, the Heisenberg group $\mathbb H_2$. If $\alpha,\beta\in\R$, the symplectic automorphisms
$\ortp(u,v,s)=(ue^{i\alpha t}, ve^{i\beta t}, s)$ can be viewed as rotations and the automorphisms $\dil_t(u,v,s)=(tu,tv,t^2s)$  as nonisotropic dilations, yielding a noncommutative example for the general case.

The  more general setting presented below has been described and explained by Folland and Stein \cite[Chap. 1]{fost}. We add the notion of rotations. For the sake of simplicity, the assumptions are slightly redundant.

\paragraph{Assumptions}
Let $G$ be a connected and simply connected nilpotent Lie group with Lie algebra $\mathfrak g$, $\dim \mathfrak g=n\ge2$. We identify $G$ with
$\mathfrak g$ via the group exponential function, so $G$ is the manifold $\mathfrak g$ together with a
group product that is given by the Campbell-Hausdorff formula and the Lie product $[\cdot,\cdot]$ on $\mathfrak g$.
In this setting the neutral element of $G$ equals $0_{\mathfrak g}$. We have the identities $x^{-1}=-x$, $xy=x+y+\frac{1}{2}[x,y]+\ldots$, $\exp_G=\id$ and $\aut G=\aut \mathfrak g\subseteq\GL(\mathfrak g)$.

We assume that  $A$ is a diagonalizable linear operator on
$\mathfrak g$ whose eigenvalues are positive and that $\{\exp( A\log t)\}_t$ is a family of Lie algebra automorphisms. 
Furthermore, we assume  that $\{\dil_t\}_{ t>0}$ is a family of Lie group automorphisms such that $d\dil_t(e)=\exp( A\log t)$.
Then $\{\dil_t\}_{ t>0}$ is called a family of {\em dilations} and $G$ is called a {\em homogeneous group} with respect to $\{\dil_t\}_t$. 
$Q\dd \trace A$ is called its {\em homogeneous dimension}.
The maps $\dil_t=\exp( A\log t)$ are linear.

Let $O:\R \to \aut(G)$ be a continuous homomorphism whose image is relatively compact.  We also write $\{O_t\}_{t\in\R}$ instead of $O$ and refer to
$\{O_t\}_{t\in\R}$ as the family of {\em rotations}.
We assume that rotations commute with dilations, which amounts to saying that the eigenspaces of  nontrivial dilations are left invariant
by rotations.

From now on we identify $\mathfrak g$ (and hence $G$) with $\R^n$. 
The map $t\mapsto \deter \ortp$ is a continuous homomorphism from $(\R,+)$ to $(\R\setminus\{0\},\cdot)$. 
Since $\deter{\ort{\R}}$ is a subset of the compact set $\deter({\overline{\ort{\R}}})$ in $\R$, it is also bounded. Hence for every $t$ we have $\deter \ortp=1$ and $\ortp\in\mathrm{SL}(n)$.
Since the closure of $\menge{\ortp}{t\in\R}$  is a compact Lie subgroup of the connected group $\mathrm{SL}(n)$, it can be conjugated into the maximal compact subgroup $\mathrm{SO}(n)$ of $\mathrm{SL}(n)$ \cite{HN}. 
Therefore, the identification of $\mathfrak g$ with $\R^n$ can be done in such a way that $\ortp\in\mathrm{SO}(n)$ for all $t$, and we will proceed on this assumption.
We define $\schwartz(G)$ to be $\schwartz(\R^n)$ and
 use the Euclidean Schwartz norms
\[ \norm{f}_{(N)}\dd \sup_{\betrag{\alpha}\le N,\ x\in G} \jap{x}^N\betrag{\partial^\alpha f(x)} 
\kq  \jap{x}\dd \left( 1+\norm{x}_2^2\right)^{\frac{1}{2}}, \ x\in\R^n.  \]
Instead, Schwartz norms defined in terms of invariant vector fields and a homogeneous norm could be used, see \cite[p. 35]{fost}.

We use the Lebesgue measure $\mathrm dx$ on $G$ and denote it by $\mu$ when measuring sets, that is, $\mu(M)=\int_M\mathrm dx$. This is a left- and right-invariant Haar measure.
 Let  $\mathrm L^p$ and $\mathrm L^{1,\infty}$ be the Lorentz spaces $\mathrm L^p(G,\mathrm dx)$ and $\mathrm L^{1,\infty}(G,\mathrm dx)$ respectively. Note that $\norm{\,\cdot\,}_{1,\infty}$ is a quasinorm, and that $\mathrm L^{1,\infty}$ is complete \cite[Thm. 1.4.11.]{graf}. 
We denote the convolution of $f$ and $g$ by
$ (f\ast g)(x)= \int_G f(xy^{-1})g(y)\,\mathrm dy$.
For any function $\psi:G\to\C$ and $t>0$ we denote by $ \psi_t\dd t^{-Q}\psi\circ\dil_{t^{-1}} $
the $\mathrm L^1$-invariantly dilated function $\psi_t$.
\paragraph{The singular integral operator}
 We fix  some $\psi\in\schwartz(G)$ with $\int \psi(x)\,\mathrm dx=0$ and define
\begin{equation} \label{deftf}
 (Tf)(x) \dd  \int_0^\infty [\psi_t\falt(f\circ \ortp) ](x)\dht \kq f\in\schwartz ,\ x\in G.
\end{equation}
With techniques similar to the ones used in the proof of Lemma \ref{loesch}, it can be shown that \eqref{deftf} converges. For the proof of the following theorem, we will rely on a definition of $T$ as an operator on $\LP^2(G)$, which is compatible with \eqref{deftf}.
\begin{theorem}\label{mainthm}
Let  $1<p\le2$ and $T$ as in \eqref{deftf}.
There exists a constant $C$ such that for all $f\in\schwartz$ and all $\lambda>0$
we have the estimates
\begin{equation}\label{weaktype} 
\leb\left(\menge{x\in
G}{\betrag{Tf(x)}>\lambda}\right)\le C\frac{\norm{f}_1}{\lambda} 
\end{equation}
and
\begin{equation}  \label{lpestimate}
\norm{Tf}_p\le C \norm{f}_p.
\end{equation}
Hence $T$ has a unique extension to a bounded linear operator on $\LP^p$, $p\in]1,2]$ and a unique extension to a bounded linear
operator from $\mathrm L^1$ to $\mathrm L^{1,\infty}$.
On the premise that $\psi(O_t x)=\psi(x)$ for all $t$ and $x$, the preceding is also true for $2<p<\infty$. 
\end{theorem}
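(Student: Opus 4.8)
The plan is to combine the $\LP^2$-theory of $T$ with a Calder\'on--Zygmund argument carried out over the adapted quasi-balls $\buu_r(z)$, and then to reach the range $p>2$ by duality. As a standing input — it is the $p=2$ case of \eqref{lpestimate} — we use that $T$ is realised as a bounded operator on $\LP^2(G)$ (cf. the remark preceding the theorem); since each $\ortp$ is measure-preserving and the dilates $\psi_t$ enjoy the Littlewood--Paley almost-orthogonality coming from $\int\psi=0$, this can be obtained from a Cotlar--Stein estimate for $T=\sum_{k\in\mathbb Z}\int_{2^k}^{2^{k+1}}\psi_t\falt(\,\cdot\,\krng\ortp)\dht$. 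We also use that $(G,\leb)$, equipped with the balls $\buu_r(z)$, is a space of homogeneous type, uniformly in the centre $z$, so that the Coifman--Weiss Calder\'on--Zygmund decomposition is available: for $f\in\LP^1$ and $\lambda>0$ we write $f=g+\sum_j b_j$ with $\norm{g}_\infty\lesssim\lambda$ (hence $\norm{g}_2^2\le\norm{g}_\infty\norm{g}_1\lesssim\lambda\norm{f}_1$), each $b_j$ supported in some $\buu_{r_j}(z_j)$ with $\int b_j=0$ and $\norm{b_j}_1\lesssim\lambda\,\leb(\buu_{r_j}(z_j))$, $\sum_j\leb(\buu_{r_j}(z_j))\lesssim\lambda\inv\norm{f}_1$, and with bounded overlap.

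The crux is the H\"ormander-type estimate
\[ \int_{G\setminus\buu_{Cr}(z_0)}\betrag{Tb(x)}\,\mathrm dx\le C'\norm{b}_1 \]
for every $b$ with $\supp b\tm\buu_r(z_0)$ and $\int b=0$, with $C,C'$ independent of $z_0,r,b$. One splits $Tb=\int_0^\infty\psi_t\falt(b\krng\ortp)\dht$ at the scale $t=r$. For $t\le r$, since $\ortn$ is an automorphism that commutes with dilations and fixes the Euclidean balls about $0$, the support $\ortn(\supp b)$ of $b\krng\ortp$ still lies in a fixed dilate of $\buu_r(z_0)$; this is exactly where the shape of $\buu_r(z)$ — the union over $\betrag{s}\le r$ of the $\dil_r$-balls centred at the rotated points $\ortps(z)$ — is used, and it is the reason ordinary dilation-balls do not suffice. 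Convolving with $\psi_t$, $t\le r$, which sits essentially at scale $\dil_t$, keeps the mass inside $\buu_{Cr}(z_0)$ up to a rapidly decaying Schwartz tail whose $\dht$-integral over $]0,r]$ is $\lesssim\norm{b}_1$. For $t\ge r$ one uses $\int b=0$ to write $\psi_t\falt(b\krng\ortp)(x)=\int[\psi_t(xy\inv)-\psi_t(xc_t\inv)](b\krng\ortp)(y)\,\mathrm dy$ with $c_t\df\ortn(z_0)$; since the displacement $y\inv c_t$ has homogeneous norm $\lesssim r$ while $\psi_t$ sits at homogeneous scale $t$, the standard $\LP^1$-continuity estimate for dilated Schwartz functions gives $\int_G\betrag{\psi_t\falt(b\krng\ortp)}\le C\min(1,r/t)\norm{b}_1$, whose $\dht$-integral over $[r,\infty[$ is again $\lesssim\norm{b}_1$. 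Adding the two ranges of $t$ proves the displayed bound.

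Granting this, \eqref{weaktype} follows by the usual splitting: $\leb(\{\betrag{Tg}>\lambda/2\})\lesssim\lambda^{-2}\norm{g}_2^2\lesssim\lambda\inv\norm{f}_1$ by Chebyshev and $\LP^2$-boundedness, while $\leb(\{\betrag{Tb}>\lambda/2\})\le\sum_j\leb(\buu_{Cr_j}(z_j))+\tfrac2\lambda\sum_j\int_{G\setminus\buu_{Cr_j}(z_j)}\betrag{Tb_j}\lesssim\lambda\inv\norm{f}_1$, by doubling and the H\"ormander estimate. Marcinkiewicz interpolation between \eqref{weaktype} and the $\LP^2$-bound gives \eqref{lpestimate} for $1<p<2$, and density of $\schwartz$ in $\LP^p$ and in $\LP^1$ together with completeness of $\LP^{1,\infty}$ yields the claimed unique bounded extensions. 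Finally, for $2<p<\infty$ assume $\psi\krng\ortp=\psi$ for all $t$. Computing the Hilbert-space adjoint of $T$ — using that the $\ortp$ are measure-preserving automorphisms and that the invariance of $\psi$ (hence of $\tilde\psi(x)\df\overline{\psi(x\inv)}$, and of every $\tilde\psi_t$, since rotations commute with dilations) lets $\ortn$ be pulled back through the convolution — yields $T^\ast g=\int_0^\infty\tilde\psi_t\falt(g\krng\ortn)\dht$, which is again of the form \eqref{deftf}, with Schwartz function $\tilde\psi$ (satisfying $\int\tilde\psi=0$ and $\tilde\psi\krng\ortp=\tilde\psi$) and rotation family $\{\ortn\}_t$. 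The already-proved case then gives $\norm{T^\ast h}_q\lesssim\norm{h}_q$ for $1<q\le2$, and dualising yields \eqref{lpestimate} and the extension for $p=q'\in[2,\infty[$.

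The main obstacle is the $t\le r$ part of the H\"ormander estimate: one has to verify that $b\krng\ortp$, for $t$ up to the radius of the ball containing $\supp b$, stays inside a fixed dilate of that ball — uniformly in centre and radius, and also in the non-abelian case, where the group product and the anisotropic dilations interact with the rotations. This absorption property is precisely what the quasi-balls $\buu_r$ were engineered to provide; establishing it (equivalently, that the $\buu_r(z)$ genuinely form a doubling, quasi-metric space of homogeneous type) is where the real work lies, whereas the remainder is the now-classical Calder\'on--Zygmund machinery.
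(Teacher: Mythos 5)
Your high-level architecture matches the paper's: $\LP^2$-boundedness via an (almost-)orthogonality/Cotlar argument, the quasi-balls $\buu_r$ as a space of homogeneous type, a Calder\'on--Zygmund decomposition giving the weak $(1,1)$ bound, Marcinkiewicz interpolation for $1<p<2$, and duality for $p>2$. These are precisely Theorems \ref{opdef}, \ref{homraum} and \ref{intab} of the paper followed by the cite of Stein's CZ theorem, and your $T^\ast$ computation for $p>2$ is the same as the paper's. Presenting the CZ step as a H\"ormander estimate on cancellative atoms rather than as the kernel condition \eqref{zugl} is a cosmetic repackaging, not a different route.

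However, there is a genuine gap in your proof of the crucial atom estimate, and it sits in the $t\ge r$ range, not (as you say in your last paragraph) in the $t\le r$ range. You anchor at $c_t=\ortn(z_0)$ and assert that for $y\in\supp(b\circ\ortp)=\buu_r(c_t)$ the displacement $y^{-1}c_t$ has homogeneous norm $\lesssim r$. That is false, and it is exactly the point the construction of $\buu_r$ is designed to get around: a point $y\in\buu_r(c_t)$ need only satisfy $\betrag{y\cdot(\ort{s}c_t)^{-1}}<r$ for some $\betrag{s}\le r$, so
\[
y^{-1}c_t=\bigl(y^{-1}\ort{s}c_t\bigr)\cdot\bigl((\ort{s}c_t)^{-1}c_t\bigr),
\]
and while the first factor is $O(r)$ in homogeneous norm, the second factor $(\ort{s}c_t)^{-1}c_t$ is a rotation of $c_t$ by an angle $\le r$, whose homogeneous norm grows without bound as $\betrag{c_t}=\betrag{z_0}\to\infty$. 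Concretely, in the $G=\R^2$ example with $z_0=(L,0)$, the Euclidean diameter of $\buu_r(z_0)$ is comparable to $\max\{r^{1/2},Lr\}$, so $\betrag{y^{-1}c_t}$ can be of order $(Lr)^2\gg r$ once $L\gg r^{-1/2}$. Consequently the ``standard $\LP^1$-continuity estimate'' does not give $\min(1,r/t)$ and the $\dht$-integral over $[r,\infty[$ does not close. What is missing is the extra change of variables that trades a rotation for a scale shift: in the paper's Theorem \ref{intab}, one replaces $K^{2\delta}_\psi(x,y)$ by $K^{2\delta-s}_\psi(x,y)$ (substituting $t\mapsto t-s$ in the $t$-integral), after which at each fixed $t$ the function $\psi$ is evaluated at two points that really are $O(\delta)$ apart in homogeneous norm; the resulting difference is then split into three pieces (the $a-b$, $b-c$, $c-d$ terms), one of which further uses the technical Lemma \ref{tecnical} on $\betrag{(\dil_{t-s}^{-1}x)^{-1}(\dil_t^{-1}x)}$. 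Without this device the long-range cancellation does not follow from a single mean-value estimate, and your argument, as written, does not prove the H\"ormander bound. The $t\le r$ range, by contrast, is the easy part: $\supp(b\circ\ortp)\subseteq\buu_{Cr}(z_0)$ for $t\le r$ by the defining shape of $\buu_r$ and engulfing, and the Schwartz tail integrates in $\dht$ over $]0,r]$ exactly as you say.
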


\section{Prerequisites}
{\em Homogeneous norms.\ } A continuous function $\betrag{\cdot}:G\to [0,\infty)$ is said  to be a 
homogeneous norm on $G$ with respect to $\famdil$ if it satisfies
$\betrag{x}=0\gdw x=0$,\  $\betrag{x^\me}=\betrag{x}$ and
\ $\betrag{\dil_tx}= t\betrag{x}$ for all $t>0$ and $x\in G$. Some authors require homogeneous norms to be smooth away from the origin.  
 There exists a
homogeneous norm $\betrag{\cdot}$ that is invariant under rotations; that is, 
\begin{equation} \label{rotinvariance} \betrag{\ort{t}x}=\betrag{x}
\end{equation}
 for all $x$, $t$. For a hint on how to produce such homogeneous norms, see the example in Sect. \ref{sec:example}.
 We will keep one such homogeneous norm fixed. 
The terms $\betrag{x\inv y}$ and $\betrag{xy^\me}$ define 
left-invariant and right-invariant quasi-distance functions respectively. These quasi-distances are symmetric and coincide if $G$ is abelian.  We refer to them as the {\em homogeneous distance} between $x$ and $y$. We use the term {\em quasi-metric} for symmetric quasi-distance functions; namely, if $d$ is a quasi-metric then $x=y\gdw d(x,y)=0$, \ $d(x,y)=d(y,x)$, and there is a constant $\kappa$ such that \ $d(x,y)\le\kappa(d(x,z)+d(z,y))$.  

{\em Balls and spheres. \ }
Balls and spheres with center $0$ and with respect to $\betrag{\cdot}$ are defined by $ B_r\dd \menge{x\in G}{\betrag{x}<r}$ and $ S_r\dd \menge{x\in G}{\betrag{x}=r}$ respectively. 
The ball with center $z$ and with respect to the
left- and right-invariant quasi-metrics are equal to
$z \mal B_r$ and $B_r\mal z$ respectively.

{\em Integration. \ }
The definition of $Q$ is natural in the sense that there exists a constant $C$ so that $ \int_{B_r\cdot z} \,\mathrm dx = Cr^Q,\ \text{for all }z\in G,\ r>0$ and that $\det D_r=r^Q$. Furthermore, there exists a positive Borel measure $\sigma$ on $S_1$ such that all $f\in\mathrm L^1(G)$ can be integrated using {\em  spherical coordinates} by
\begin{equation}\label{spherical}
 \int_G f(x) \,dx= \int_0^\infty\int_{S_1} f(D_r x) \,\mathrm d\sigma(x)r^{Q-1}\,\mathrm dr. 
\end{equation}
Since $\ortp\in\mathrm{SL}(n)$, $\mu$ is invariant under rotations:
\begin{equation}\label{rotinv}
\int f(x)\,\mathrm dx=\int( f\circ\ortp)(x)\,\mathrm dx
\end{equation}
for every $f$ and $t$. 
Note that $  f_r \ast g_r = (f\ast g)_r$, and that $(f\falt g)\krng \ortp =
(f\krng\ortp)\ast (g\krng\ortp)$, for all $f,g\in \LP^1(G)$, $t\in\R$, $s>0$.

{\em Constants. \ } We will use miscellaneous constants $C, C', C_1>0$ etc. whose values vary from line to line and who may depend on the geometric setting, for example, on the homogeneous group. Occasionally we write $a(x)\lesssim b(x)$ or  $a\lesssim b$ to indicate that there
is a constant $C$ such that $a(x)\le C\cdot b(x)$ for all $x$. Furthermore, $a\simeq b$ shall mean that 
for some $C_1,C_2$, we have
$a(x)\le C_1\cdot b(x)\le C_2 \cdot a(x)$ for all $x$. 

{\em Norm estimates. \ }
By $\gamma$ we denote the smallest eigenvalue of $A$ and by $\Gamma$ the greatest
eigenvalue of $A$.

For any vector space norm $\norm{\cdot}$ and any relatively compact
neighborhood $U$ of the origin there exist constants $C_1,C_2>0$ such that we have the norm estimate
\begin{equation}  \label{normab}
\betrag{x}^\Gamma\le C_1 \norm{x}\le C_2\betrag{x}^\gamma \quad \text{for all }  x\in U  \end{equation}
 and \begin{equation}\label{normab2}
\betrag{x}^\gamma\le C_1 \norm{x}\le C_2\betrag{x}^\Gamma \quad \text{for all }  x\in G\setminus U.
\end{equation}

{\em Cancellation. \ } 
The quantity $\betrag{\psi(x)-\psi(y)}$ can be estimated in various ways in terms of the distance between $x$ and $y$,  for example, see \cite[p. 28]{fost}. The following lemma is fine for our purpose.
\begin{lemma} \label{mws}

For any $l\in\N$ there exists a constant $C$ and a 
Schwartz norm $\norm{\cdot}_{(N)}$ such that
for all $\psi\in\schwartz(G)$ and all $x,y\in G$ we have
\[ \betrag{\psi(x)-\psi(y)}\le C \norm{\psi}_{(N)}\cdot \betrag{x^{-1}y}^\gamma\cdot
\left(\frac{1}{(1+\norm{x}_2)^l}+ \frac{1}{(1+\norm{y}_2)^l}\right).
\]
\end{lemma}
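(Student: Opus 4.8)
The plan is to handle the ``large distance'' regime by the trivial pointwise bound and to treat the ``small distance'' regime by a mean value argument along a well-chosen curve, with the threshold between the two tuned to the Euclidean size of the base point. Since the asserted inequality is symmetric in $x$ and $y$ (the homogeneous norm satisfies $\betrag{(x^{-1}y)^{-1}}=\betrag{x^{-1}y}$), I would first reduce to the case $\norm{x}_2\le\norm{y}_2$, so that it is enough to bound $\betrag{\psi(x)-\psi(y)}$ by $C\norm{\psi}_{(N)}\betrag{x^{-1}y}^\gamma(1+\norm{x}_2)^{-l}$, i.e.\ by the larger of the two terms on the right. Write $h=x^{-1}y$, so that $y=x\cdot h$ and $\betrag{h}=\betrag{x^{-1}y}$. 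Since $G$ is nilpotent the group law is polynomial, $x\cdot w=x+w+P(x,w)$ with $P(x,0)=P(0,w)=0$; let $D_0$ denote the degree of $P$ in its first variable, so $D_0<\infty$ and $D_0=0$ precisely when $G$ is abelian. I would then fix $N:=l+D_0$ and a small constant $c>0$, and split according to whether $\betrag{h}^\gamma\ge c\,(1+\norm{x}_2)^{-D_0}$ or not.

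In the first case no smoothness is needed: from $\betrag{\psi(z)}\le\norm{\psi}_{(N)}\jap{z}^{-N}$ and $\norm{x}_2\le\norm{y}_2$ one gets $\betrag{\psi(x)-\psi(y)}\lesssim\norm{\psi}_{(N)}(1+\norm{x}_2)^{-N}=\norm{\psi}_{(N)}(1+\norm{x}_2)^{-l}(1+\norm{x}_2)^{-D_0}$, and the case hypothesis turns the last factor into $\lesssim\betrag{h}^\gamma$. In the second case $\betrag{h}<1$, and I would join $x$ to $y$ by the curve $\eta(s):=x\cdot\dil_s h$, $s\in[0,1]$, which has $\eta(0)=x$, $\eta(1)=y$ and $\betrag{\dil_s h}=s\betrag{h}<1$. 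Using the norm estimate \eqref{normab} on the unit ball (so $\norm{\dil_s h}_2\lesssim(s\betrag{h})^\gamma$), the identity $\frac{d}{ds}\dil_s h=\frac1s A\,\dil_s h$, and the fact that $P$ and $\partial_w P$ grow at most like $(1+\norm{x}_2)^{D_0}$ on the relevant range of $w$ (every monomial of $P$ has at least one factor from $w$ and at most $D_0$ factors from $x$), I expect to obtain
\[
\norm{\eta(s)-x}_2\lesssim s^{\gamma}\betrag{h}^\gamma(1+\norm{x}_2)^{D_0},\qquad
\norm{\eta'(s)}_2\lesssim s^{\gamma-1}\betrag{h}^\gamma(1+\norm{x}_2)^{D_0}.
\]
The first bound together with the case hypothesis keeps $\norm{\eta(s)-x}_2\le\frac12$ for all $s$ once $c$ is chosen small enough, so $\jap{\eta(s)}\ge\frac12\jap{x}$ along the whole curve; since $\gamma>0$ makes $s\mapsto s^{\gamma-1}$ integrable on $[0,1]$, the fundamental theorem of calculus applies along $\eta$ and gives $\betrag{\psi(y)-\psi(x)}\le\int_0^1\betrag{\nabla\psi(\eta(s))}\,\norm{\eta'(s)}_2\,\mathrm ds\lesssim\norm{\psi}_{(N)}\jap{x}^{-N}(1+\norm{x}_2)^{D_0}\betrag{h}^\gamma$, which is $\lesssim\norm{\psi}_{(N)}\betrag{h}^\gamma(1+\norm{x}_2)^{-l}$ because $N-D_0=l$.

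The main obstacle, and the reason for introducing the curve $\eta$ and for the regime split, is that $x\cdot h$ differs from the Euclidean sum $x+h$ by commutator terms that may be as large as a power of $\norm{x}_2$, so the naive mean value theorem along the Euclidean segment from $x$ to $y$ fails to exhibit the decay in $\norm{x}_2$ demanded by the right-hand side. Passing to the dilation curve $s\mapsto x\cdot\dil_s h$ repairs this, but only while $\betrag{h}$ is small compared with a negative power of $\norm{x}_2$; on the complementary range the crude pointwise bound of the first case is already sufficient, precisely because there $\betrag{h}^\gamma$ is bounded below by that same negative power. Making the two ranges meet is what forces $N$, hence the Schwartz norm $\norm{\cdot}_{(N)}$, to be chosen large in terms of $l$ and of the nilpotency data $D_0=D_0(G)$, with $C$ depending on $l$ and on the group; the abelian case is simply $D_0=0$, where $\eta$ is the Euclidean segment and no commutators intervene.
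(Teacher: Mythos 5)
Your proposal is correct, but it takes a genuinely different route from the paper's proof, and both are worth contrasting. The paper splits on the size of $\betrag{x^{-1}y}$ alone (trivial for $\betrag{x^{-1}y}\ge1$, and for $\betrag{x^{-1}y}\le1$ it proves the Euclidean distance bound $\norm{x-y}_2\le C(1+R)^p\betrag{x^{-1}y}^\gamma$ with $R=\min\{\norm{x}_2,\norm{y}_2\}$, where $p$ comes from the eigenvalue gap $\Gamma-\gamma$ via the norm estimate \eqref{normab2}, then integrates $\nabla\psi$ along a path that avoids the Euclidean ball of radius $R$). You instead reduce by symmetry to $\norm{x}_2\le\norm{y}_2$, split on the size of $\betrag{x^{-1}y}^\gamma$ relative to $(1+\norm{x}_2)^{-D_0}$, where $D_0$ is the $x$-degree of the Campbell--Hausdorff polynomial $P$, and in the small-distance regime you integrate along the dilation curve $s\mapsto x\cdot\dil_s h$. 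Both arguments rest on quantifying how far $x\cdot h$ strays from $x+h$, but the paper encodes this through the dilation eigenvalues $\gamma,\Gamma$ and a compactness argument for the map $(x,z)\mapsto x-xz$, while you read it directly off the degree $D_0$ of the BCH polynomial; your curve $s\mapsto x\cdot\dil_s h$ is also a more structured replacement for the paper's ad hoc path staying in $\{\norm{z}_2\ge R\}$. Your regime split is tuned so that the dilation curve never leaves a Euclidean $\tfrac12$-neighbourhood of $x$, which is what lets you absorb $\jap{\eta(s)}^{-N}\simeq\jap{x}^{-N}$; the paper instead lets the path wander as far as $\norm{x-y}_2\lesssim(1+R)^p\betrag{x^{-1}y}^\gamma$ but compensates by demanding Schwartz decay of order $l+p$ on $\nabla\psi$ outside $B(0,R)$. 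The two choices of $N$ ($l+D_0$ versus $l+p$) are therefore not the same number, but each makes its own argument close. One minor point: in the nonabelian non-isotropic case $s\mapsto x\cdot\dil_s h$ is not the Euclidean segment even when $G$ is abelian, and the curve is not differentiable at $s=0$ when $\gamma<1$, but you correctly observe that $s^{\gamma-1}$ is still integrable, so the fundamental theorem of calculus applies after a limiting argument at the left endpoint.
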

 
\begin{proof} 
We find $N$ and $C$ such that the Lemma is true on the additional assumption that $\betrag{x^{-1}y} \ge 1$, because  $\psi\in\schwartz$. We will possibly increase the values of $N$ and $C$ later.
Now let $\betrag{x^{-1}y} \le 1$.
Since the map
\[ h: G\times G \to G \kq (x,z)\mapsto x-xz \]
is smooth, any derivative of $h$ is bounded on any compact set. Furthermore, we have
$h(x,0)=0$. Now compactness and the norm estimate \eqref{normab} yield for any $x$ with $\betrag{x}\le 1$
\begin{equation} \label{a1} \norm{x-y}_2=\norm{h(x,x^\me y)}_2\le C_1 \norm{x^\me y}_2
\le C_2\betrag{x\inv y}^\gamma.
\end{equation}
If $t=\betrag{x}\ge 1$, using \eqref{a1} with $x-y$ replaced by $D_{t\inv}x-D_{t\inv}y$, we obtain 
\begin{equation} \begin{split} \label{a2}
\norm{x-y}_2&=\norm{D_t(D_{t\inv}x-D_{t\inv}y)}_2
\le \norm{D_t}_{\mathrm{op}} \norm{D_{t\inv}x-D_{t\inv}y}_2 \\
&\le Ct^\Gamma \betrag{(D_{t\inv}x)\inv (D_{t\inv}y)}^\gamma
= Ct^{\Gamma-\gamma} \betrag{x\inv y}^\gamma
= C\betrag{x}^{\Gamma-\gamma} \betrag{x\inv y}^\gamma .
\end{split} \end{equation}
Interchanging the roles of $x$ and $y$ in \eqref{a2} and combining with \eqref{a1}, we get

\[ \norm{x-y}_2\le C (1+\min\{ \betrag{x},
 \betrag{y}\}^{\Gamma-\gamma})
 \betrag{x\inv y}^\gamma.\]
With the help of \eqref{normab2} we find some $p\in\N$ such that
$\betrag{z}^{\Gamma-\gamma}\le C \norm{z}_2^p$ for all $z\in G\setminus B_1$.
Setting  $R\dd\min\{ \norm{x}_2,\norm{y}_2 \}$, it follows that
\begin{equation}\label{ab1}
 \norm{x-y}_2\le C (1+R)^p
 \betrag{x\inv y}^\gamma.
\end{equation}

We choose  an integration path from $x$ to $y$ in $M\dd \menge{z\in G}{\norm{z}_2\ge R}$ such that
the length of this path is bounded by a constant multiple of $\norm{x-y}_2$.
Let us estimate $\betrag{\psi(x)-\psi(y)}$ by integrating along that path.  
Since $\psi\in\schwartz$, we find a number $N$ such that 
\begin{equation}\label{ab2}
 \norm{(\nabla\psi)(1+\norm{\cdot}_2)^{l+p}}_\infty\le \norm{\psi}_{(N)}.
\end{equation}
Putting \eqref{ab1} and \eqref{ab2} together, we obtain
\[ \betrag{\psi(x)-\psi(y)}\le
C_1 \norm{(\nabla\psi)1_M}_\infty \norm{x-y}_2
\le C_2\frac{\norm{\psi}_{(N)}}{(1+R)^{l+p}}
(1+R)^p\betrag{x\inv y}^\gamma .
 \]
Finally, because of
\[ \frac{1}{(1+R)^l}\le\frac{1}{(1+\norm{x}_2)^l}+\frac{1}{(1+\norm{y}_2)^l},
\]
we get the desired result.
\end{proof}

\begin{lemma} \label{loesch}
There is a constant $C$ and a
Schwartz norm $\norm{\cdot}_{(N)}$ such that for all
$\phi\in\schwartz(G)$ with $\int \phi = 0$, all $\psi\in\schwartz$, and all $0<s<1$ we have
\[ \norm{\psi\falt\phi_s}_1 \le C \norm{\psi}_{(N)}\norm{\phi}_{(N)} s^\gamma \quad\text{and}\quad
\norm{\psi_s\falt\phi}_1 \le C \norm{\psi}_{(N)}\norm{\phi}_{(N)} s^\gamma. \]
\end{lemma}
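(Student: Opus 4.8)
The plan is to reduce both inequalities to Lemma~\ref{mws} together with the elementary scaling identities $\betrag{\dil_s z}=s\betrag{z}$, $\phi_s(\dil_s z)=s^{-Q}\phi(z)$ and $\det\dil_s=s^Q$; nothing deeper is needed.

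First I would treat $\norm{\psi\falt\phi_s}_1$. Since $\int\phi=0$ forces $\int\phi_s=0$, I may insert a compensating term and write, for each $x$,
\[ (\psi\falt\phi_s)(x)=\int_G\bigl(\psi(xy^{-1})-\psi(x)\bigr)\phi_s(y)\,dy, \]
so that the cancellation is spent on differencing the \emph{un-dilated} factor $\psi$, whose smoothness and decay do not deteriorate as $s\to0$. Fixing an integer $l>n$ and applying Lemma~\ref{mws} to the points $xy^{-1}$ and $x$ --- for which $(xy^{-1})^{-1}x=y$ --- I obtain a Schwartz norm $\norm{\cdot}_{(N)}$ and a constant with
\[ \betrag{\psi(xy^{-1})-\psi(x)}\le C\norm{\psi}_{(N)}\betrag{y}^\gamma\left(\frac{1}{(1+\norm{xy^{-1}}_2)^l}+\frac{1}{(1+\norm{x}_2)^l}\right). \]
Integrating in $x$ first and using the bi-invariance of $\mu$, one has $\int_G(1+\norm{xy^{-1}}_2)^{-l}\,dx=\int_G(1+\norm{x}_2)^{-l}\,dx$, a finite constant precisely because $l>n$, so $\norm{\psi\falt\phi_s}_1\le C\norm{\psi}_{(N)}\int_G\betrag{y}^\gamma\betrag{\phi_s(y)}\,dy$. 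The substitution $y=\dil_s z$ turns the remaining integral into $s^\gamma\int_G\betrag{z}^\gamma\betrag{\phi(z)}\,dz$, and \eqref{normab}--\eqref{normab2} give $\betrag{z}^\gamma\lesssim\jap{z}$ for all $z$, so this integral is $\lesssim\norm{\phi}_{(N)}$ as soon as $N>n+1$. Enlarging $N$ to absorb the finitely many Schwartz norms invoked completes the first estimate.

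For $\norm{\psi_s\falt\phi}_1$ I would again spend the cancellation on the factor carrying the dilation --- which here is $\psi$, so this step uses $\int\psi=0$, as is the case for the kernel $\psi$ in \eqref{deftf}; differencing $\psi_s$ itself and rescaling would instead produce a power $s^{-\gamma}$, i.e.\ the wrong direction. The cleanest way to make this precise is transposition. Writing $\tilde f(x):=f(x^{-1})=f(-x)$, unimodularity of $G$ yields $(\psi_s\falt\phi)^{\sim}=\tilde\phi\falt\widetilde{\psi_s}$ and $\norm{h}_1=\norm{\tilde h}_1$; since $\dil_{s^{-1}}$ is linear it commutes with $x\mapsto -x$, whence $\widetilde{\psi_s}=(\tilde\psi)_s$, while $\norm{\tilde f}_{(N)}=\norm{f}_{(N)}$ and $\int\tilde\psi=\int\psi=0$. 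Therefore
\[ \norm{\psi_s\falt\phi}_1=\norm{\tilde\phi\falt(\tilde\psi)_s}_1, \]
which is an instance of the first estimate (with $\tilde\phi,\tilde\psi$ in the roles of $\psi,\phi$) and hence $\le C\norm{\phi}_{(N)}\norm{\psi}_{(N)}s^\gamma$. Equivalently one reruns the first argument, now differencing $\phi$ by means of $\int_G\psi_s(xy^{-1})\,dy=0$ and invoking the right-invariant counterpart of Lemma~\ref{mws} (which is Lemma~\ref{mws} applied to $\tilde\phi$).

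I expect the only genuine obstacle to be the bookkeeping just highlighted: the cancellation must act on whichever of the two functions carries the subscript $s$, because the alternative loses rather than gains a power of $s$ after rescaling. The integrability thresholds ($l>n$, $N>n+1$), the decoupling of the two decay tails by bi-invariance of Haar measure, and the appearance of the exponent $s^\gamma$ out of the homogeneity $\betrag{\dil_s z}=s\betrag{z}$ are all routine.
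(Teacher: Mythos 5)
Your proof of the first inequality coincides with the paper's: insert the mean of $\phi_s$ to difference the undilated $\psi$, apply Lemma~\ref{mws} with $l=n+1$, integrate out $x$ by bi-invariance, and rescale $y=\dil_s z$ to produce $s^\gamma$. The paper in fact only writes out this first inequality and says nothing about the second.

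For the second inequality your instinct is exactly right, and it exposes a genuine defect in the paper's statement of the lemma. With only $\int\phi=0$ the estimate $\norm{\psi_s\falt\phi}_1\lesssim s^\gamma$ is \emph{false}: if $\int\psi=c\neq0$ then $\psi_s$ is $c$ times an approximate identity, so $\psi_s\falt\phi\to c\,\phi$ in $\LP^1$ as $s\to0$, and the left-hand side tends to $\betrag{c}\norm{\phi}_1>0$ rather than to $0$. Your scaling calculation makes the same point: spending the cancellation $\int\phi=0$ to difference $\psi_s$ produces $\betrag{\dil_{s^{-1}}y}^\gamma=s^{-\gamma}\betrag{y}^\gamma$, i.e.\ the wrong power. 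So the lemma should also assume $\int\psi=0$; under that hypothesis your transposition argument $(\psi_s\falt\phi)^\sim=\tilde\phi\falt(\tilde\psi)_s$, $\norm{\tilde h}_1=\norm{h}_1$, $\norm{\tilde f}_{(N)}=\norm{f}_{(N)}$, $\int\tilde\psi=\int\psi=0$, reduces cleanly to the first inequality. This extra hypothesis is in fact satisfied everywhere the lemma is used (in the Cotlar estimate of Theorem~\ref{opdef} both $\phi^{s,t}=\psi\krng\ort{s-t}$ and $\chi=\psi^\ast$ or $\psi$ have vanishing integral), so the omission is harmless in context, but you are right that the lemma as printed cannot be proved. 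The alternative you sketch --- rerunning the argument with the right-invariant variant of Lemma~\ref{mws} (i.e.\ Lemma~\ref{mws} applied to $\tilde\phi,\tilde\psi$, which yields $\betrag{xy^{-1}}^\gamma$ in place of $\betrag{x^{-1}y}^\gamma$) --- is also sound and avoids any comparison between the left- and right-invariant quasi-distances.
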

\begin{proof}
We give a proof of the first inequality. Using Lemma  \ref{mws} with $l=n+1$, we obtain

\[ \begin{split}
\norm{\psi\falt\phi_s}_1
&= \int  \betrag{\int \psi(xy^\me)\phi_s(y)\,dy}\,dx\\
&= \int \betrag{\int [\psi(xy^\me)-\psi(x)]\phi_s(y)\,dy}\,dx\\
&= \int \int \betrag{\psi(xy^\me)-\psi(x)}
\mal\betrag{\phi_s(y)}\,dy \,dx\\
&\le C \norm{\psi}_{(N)} \int \int \betrag{y}^\gamma
( \frac{1}{(1+\norm{x}_2)^{n+1}}+ \frac{1}{(1+\norm{xy\inv}_2)^{n+1}})
\betrag{\phi_s(y)}\,dy\,dx\\
&\le C \norm{\psi}_{(N)} \int  \betrag{y}^\gamma
\int
( \frac{1}{(1+\norm{x}_2)^{n+1}}+ \frac{1}{(1+\norm{xy\inv}_2)^{n+1}})
\,dx
\betrag{\phi_s(y)}\,dy\\
&\le C \norm{\psi}_{(N)}\left(\int  \frac{1}{(1+\norm{x}_2)^{n+1}}\,dx\right)
\int\betrag{y}^\gamma\betrag{\phi_s(y)}\,dy\\
&\le C \norm{\psi}_{(N)}
\int \betrag{D_s y}^\gamma \phi(y) \,dy \\
&\le C \norm{\psi}_{(N)} s^\gamma
\int \betrag{y}^\gamma \phi(y) \,dy \\
&\le C \norm{\psi}_{(N)}\norm{\phi}_{(N)}s^\gamma .
\end{split} \]
\end{proof}

\section{$\LP^2$ results }
The space $\mathrm L^2(G)$ together with the product $ \spd{f}{g}\dd \int_G f(x)\overline{g(x)}\,\mathrm dx$
is a Hilbert space. This allows
to extend the linear operator $T$ defined by \eqref{deftf} to a bounded operator on $\mathrm L^2(G)$, yielding \eqref{lpestimate} for $p=2$.
Observe that the operators
\begin{equation*} \label{defat}
 A_t:\LP^2(G)\to\LP^2(G)\kq A_tf\dd\psi_t\falt(f\krng\ort{t})\kq t>0.
\end{equation*}
are bounded by Young's inequality, which is valid in the context of homogeneous groups, and by \eqref{rotinv}. Namely,  we have
$ \norm{A_t}\le \norm{\psi}_1$ for every $t>0$; and 
$   A_t^\ast f=  (\psi_t^\ast\falt f)\krng \ortn$, where $ \psi_t^\ast(x)=\overline{\psi(x^\me)}$.\\
The  set $\mathcal E\dd\menge{E\subseteq ]0,\infty[ }{E \text{ measurable  and } \int_E \dht <\infty}$ ordered by inclusion is a directed
set. 

\begin{theorem} \label{opdef}
The net $\left(\int_E  A_t \dht\right)_{E\in \mathcal E}$
converges in the weak operator topology to a bounded linear operator $\widetilde T$ on $\LP^2(G)$,
whose restriction to $\schwartz$ equals $T$.
\end{theorem}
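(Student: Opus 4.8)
The plan is to obtain $\widetilde T$ from the continuous form of the Cotlar--Stein almost-orthogonality lemma, and afterwards to identify $\widetilde T|_\schwartz$ with $T$ by realising $Tf$ as an $\LP^2$-valued Bochner integral. Write $\psi^\ast(x)\df\overline{\psi(x^\me)}$; since each $\ortp$ is orthogonal, rotations are $\LP^2$-isometries, they preserve $\leb$ and the vanishing integral, and $\norm{\phi\krng\ort{r}}_{(N)}\lesssim\norm{\phi}_{(N)}$ for the Euclidean Schwartz norms, so that $\int\psi^\ast\krng\ort{r}=0$ and $\norm{\psi^\ast\krng\ort{r}}_{(N)}\lesssim\norm{\psi}_{(N)}$ for every $r$. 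The main step is to prove, for all $s,t>0$, the almost-orthogonality estimate
\[ \norm{A_s^\ast A_t}+\norm{A_s A_t^\ast}\ \le\ C\,\norm{\psi}_{(N)}^2\,\bigl(\min\{s/t,t/s\}\bigr)^\gamma . \]
Using $(\phi\falt\chi)\krng\ort{r}=(\phi\krng\ort{r})\falt(\chi\krng\ort{r})$ and $\ort{-t}\krng\ortps=\ort{s-t}$ from the Prerequisites, together with the commutation of rotations and dilations (which gives $\psi_t^\ast\krng\ort{r}=(\psi^\ast\krng\ort{r})_t$), one computes
\[ A_s^\ast A_t f=\bigl(\psi_s^\ast\falt\psi_t\falt(f\krng\ortp)\bigr)\krng\ortns\kqq A_sA_t^\ast g=\psi_s\falt\bigl(\psi_t^\ast\krng\ort{s-t}\bigr)\falt\bigl(g\krng\ort{s-t}\bigr), \]
so that, by Young's inequality and the $\LP^2$-isometry property, $\norm{A_s^\ast A_t}\le\norm{\psi_s^\ast\falt\psi_t}_1$ and $\norm{A_sA_t^\ast}\le\norm{\psi_s\falt(\psi_t^\ast\krng\ort{s-t})}_1$. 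Pulling out $(\,\cdot\,)_{\max\{s,t\}}$ by means of $(\phi_a)_b=\phi_{ab}$ and $\phi_b\falt\chi_b=(\phi\falt\chi)_b$, each of these two $\LP^1$-norms becomes one of the form $\norm{\eta\falt\theta_r}_1$ or $\norm{\eta_r\falt\theta}_1$ with $r=\min\{s/t,t/s\}<1$ and $\eta,\theta\in\{\psi,\psi^\ast,\psi^\ast\krng\ort{s-t}\}$; since all three of these functions have vanishing integral, one of the two inequalities of Lemma \ref{loesch} applies whichever factor is the dilated one and bounds the norm by $C\norm{\psi}_{(N)}^2 r^\gamma$.

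With the measure $\dht$ one has $\int_0^\infty\bigl(\min\{u,1/u\}\bigr)^{\gamma/2}\,\frac{\mathrm du}{u}=\frac{4}{\gamma}<\infty$, so the estimate above yields $\sup_t\int_0^\infty\norm{A_s^\ast A_t}^{1/2}\dhs<\infty$ and likewise with $A_sA_t^\ast$. The continuous (measure-weighted) Cotlar--Stein lemma then produces a constant $M$, depending only on $\psi$ and on the geometric data, with $\norm{\int_E A_t\dht}\le M$ for all $E\in\mathcal E$. Since $t\mapsto A_t f$ is norm-continuous with $\norm{A_t f}_2\le\norm{\psi}_1\norm{f}_2$ and $\int_E\dht<\infty$, the operator $B_E\df\int_E A_t\dht$ is well defined, $B_E f=\int_E A_t f\dht$ is a genuine Bochner integral in $\LP^2(G)$, and $\norm{B_E}\le M$ uniformly in $E\in\mathcal E$.

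Next, for $f\in\schwartz$ I would show $\int_0^\infty\norm{A_t f}_2\dht<\infty$. For $t\ge1$, Young's inequality gives $\norm{A_t f}_2=\norm{\psi_t\falt(f\krng\ortp)}_2\le\norm{\psi_t}_2\norm{f}_1=t^{-Q/2}\norm{\psi}_2\norm{f}_1$, which is $\dht$-integrable on $[1,\infty[$ because $Q>0$. For $0<t\le1$, the cancellation $\int\psi=0$ --- used to replace $(f\krng\ortp)(y)$ by $(f\krng\ortp)(y)-(f\krng\ortp)(x)$ under the convolution --- together with the mean-value estimate of Lemma \ref{mws} applied to $f\krng\ortp$ (whose Schwartz norms are $\lesssim\norm{f}_{(N)}$ uniformly in $t$), carried out exactly as in the proof of Lemma \ref{loesch}, gives $\norm{A_t f}_2\le C\norm{\psi}_{(N)}\norm{f}_{(N)}\,t^\gamma$, which is $\dht$-integrable near $0$. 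Consequently $\int_0^\infty A_t f\dht$ exists in $\LP^2(G)$, $B_E f\to\int_0^\infty A_t f\dht$ in $\LP^2$, and by Fubini's theorem (justified by $\int_0^\infty\norm{A_t f}_2\dht<\infty$) this limit has the a.e.\ representative $x\mapsto\int_0^\infty(A_t f)(x)\dht$, which is $Tf$ by \eqref{deftf}. Thus $B_E f\to Tf$ in $\LP^2(G)$ for every $f\in\schwartz$, and $\norm{Tf}_2=\lim_E\norm{B_E f}_2\le M\norm{f}_2$.

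Finally, since $\schwartz$ is dense in $\LP^2(G)$ and the $B_E$ are uniformly bounded by $M$, a routine $3\varepsilon$-argument shows that $\spd{B_E f}{g}$ converges for all $f,g\in\LP^2(G)$; the limiting sesquilinear form is bounded by $M$, hence equals $\spd{\widetilde T f}{g}$ for a bounded operator $\widetilde T$ on $\LP^2(G)$, and by construction $B_E\to\widetilde T$ in the weak operator topology. Uniqueness of weak limits together with the previous paragraph gives $\widetilde T f=Tf$ for every $f\in\schwartz$. The principal obstacle is the almost-orthogonality estimate of the first paragraph --- in particular keeping track of the rotations, especially the one sitting \emph{inside} the convolution in $A_sA_t^\ast$, so as to reduce everything to Lemma \ref{loesch}; the remaining steps are routine.
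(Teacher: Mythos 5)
Your proposal is correct and rests on the same Cotlar--Stein strategy as the paper: reduce $\norm{A_s^\ast A_t}$ and $\norm{A_sA_t^\ast}$ to $\LP^1$-norms of convolutions, apply Lemma~\ref{loesch}, invoke the continuous almost-orthogonality lemma, and then identify the weak limit on $\schwartz$. Your bookkeeping in the almost-orthogonality step (pull out the dilation by $\max\{s,t\}$ and absorb the rotation $\ort{s-t}$ into the Schwartz norm) is equivalent to the paper's substitution trick ($t\mapsto ts$, then $t\mapsto t^{-1}$ on the $t>1$ piece, carrying the rotation along as an auxiliary family $\phi^{s,t}$). The identification step is where you genuinely depart: you realise $Tf$ as an $\LP^2$-valued Bochner integral $\int_0^\infty A_tf\dht$, which requires $\int_0^\infty\norm{A_tf}_2\dht<\infty$, whereas the paper works scalar-valued with the pairing $\spd{A_tf}{g}$, applies Fubini, and needs only $\norm{A_tf}_\infty\lesssim t^{-Q}$ together with $\norm{A_tf}_1\lesssim t^\gamma$ for $t<1$. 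Your route buys a cleaner statement ($B_E f\to Tf$ in norm for $f\in\schwartz$), at the cost of a new $\LP^2$ estimate. On that point, the claimed bound $\norm{A_tf}_2\lesssim\norm{\psi}_{(N)}\norm{f}_{(N)}t^\gamma$ does not follow ``exactly as in the proof of Lemma~\ref{loesch}'': placing the cancellation on the dilated \emph{left} factor $\psi_t$ and applying Lemma~\ref{mws} produces the conjugated displacement $\betrag{x^{-1}zx}$ rather than $\betrag{z}$, and passing from $\LP^1$ to $\LP^2$ requires Minkowski's integral inequality. Neither obstacle is fatal, but they must be addressed; the simplest repair is to combine the paper's $\norm{A_tf}_1\lesssim t^\gamma$ with $\norm{A_tf}_\infty\le\norm{\psi}_1\norm{f}_\infty$ (Young) and interpolate, which gives $\norm{A_tf}_2\lesssim t^{\gamma/2}$ --- still $\dht$-integrable near $0$ and enough for your Bochner argument.
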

\begin{proof} The main task is to show that there is a  function $h$ such that
\begin{equation}\label{cotlar1}   \norm{A_t A_s^{\ast}}^\halb \le h(t,s) \quad\text{ and }\quad \norm{A_t^{\ast} A_s}^\halb \le h(t,s)
 \end{equation}
and 
\begin{equation} \label{cotlarsassumption}
\sup_{s>0} \int_0^\infty h(t,s)\dht  \  < \infty. 
\end{equation}
Then the proof will be finished by using a continuous version of Cotlar's lemma \cite[Appendix B]{foll1}.
Let $s,t>0$ and $f\in \LP^2(G)$. We have the estimates
\[ \begin{split}
\norm{A_t A^\ast_s f}_2
&=\norm{\psi_t\falt[(\psi^\ast_s\falt f)\krng \ort{s}^\me\krng \ort{t}] }_2 \\
&=\norm{(\psi_t\krng\ort{s-t})\falt(\psi^\ast_s\falt f) }_2\\
&=\norm{[(\psi\krng\ort{s-t})_t\falt\psi^\ast_s]\falt f) }_2\\
&\le\norm{(\psi\krng\ort{s-t})_t\falt\psi^\ast_s}_1
\norm{f}_2,
\end{split} \]
\[ \begin{split}
\norm{A_t^\ast A_s f}_2
&=\norm{(\psi^\ast_t \falt[\psi_s\falt (f\krng \ort{s})]) \krng \ort{t}^\me}_2 \\
&\le\norm{\psi_t^\ast\falt\psi_s}_1\norm{f}_2.
\end{split} \]
Setting
\[ h(t,s) \dd \norm{(\psi\krng\ort{s-t})_t\falt\psi^\ast_s}_1^\halb
+ \norm{\psi_t^\ast\falt\psi_s}_1^\halb, \]
 we obtain \eqref{cotlar1}.
It remains to show \eqref{cotlarsassumption}.
This can be done by using the Schwartz norms of $\psi\krng\ort{s-t}$, which
are bounded uniformly in $s$ and $t$, and Lemma \ref{loesch}. Let us consider an arbitrary  family $\{\phi^{s,t}\}_{s,t>0} $   of Schwartz functions such that 
 $\norm{\phi^{s,t}}_{(N)}\le C_N$ for all $s,t>0$. 
Furthermore, assume $\chi\in\schwartz$ and $\int \phi^{s,t}=\int \chi=0$ for all $s,t>0$. 
Then there exists a Schwartz norm  $\norm{\cdot}_{(N)}$ such that for all $s>0$ the estimate
\[ \begin{split}
\int_0^\infty \norm{\phi^{s,t}_t\falt \chi_s}_1^\halb \dht
&= \int_0^\infty \norm{\phi^{s,ts}_{ts}\falt \chi_s}_1^\halb \dht \\
&= \int_0^\infty \norm{(\phi^{s,ts}_t\falt \chi)_s}_1^\halb \dht \\
&=\int_0^\infty \norm{\phi^{s,ts}_t\falt \chi}_1^\halb \dht \\
&=\int_0^1 \norm{\phi^{s,ts}_t\falt \chi}_1^\halb \dht
+\int_0^1 \norm{\phi^{s,t^\me s}_{t^\me}\falt \chi}_1^\halb \dht \\
&=\int_0^1 \norm{\phi^{s,ts}_t\falt \chi}_1^\halb \dht
+\int_0^1 \norm{\phi^{s,t^\me s}\falt \chi_t}_1^\halb \dht \\
&\le C \left(\sup_{s,t}\norm{\phi^{s,t}}^\halb_{(N)}\right)\norm{\chi}^\halb_{(N)}
\int_0^1 t^\frac{\gamma}{2} \dht\\
&\le C \left(\sup_{s,t}\norm{\phi^{s,t}}^\halb_{(N)}\right)\norm{\chi}^\halb_{(N)}
\end{split} \]
holds. Setting $ \phi^{s,t} \dd \psi\krng\ort{s-t} $, $\chi\dd \psi^\ast$;   and    $ \phi^{s,t} \dd \psi^\ast $, $\chi\dd \psi$; yields  \eqref{cotlarsassumption}. It follows that $\widetilde T=\lim_{E\in \mathcal E}\int_E  A_t \dht$ for some bounded operator $\widetilde T$ on $\LP^2(G)$. 

Let $f$ and $g$ be Schwartz functions.
We have $\norm{  A_tf}_\infty \le \norm{\psi_t}_\infty \norm{f}_1\le C t^{-Q}$ and furthermore, for $0<t<1$,
Lemma \ref{loesch} yields $\norm{A_tf}_1 \le C  t^\gamma\norm{\psi}_{(N)} \norm{f}_{(N)}$. This results in
\[ \int_0^1\int_G\betrag{   t^\me A_t(x)\cdot \overline{g(x)}}\,\mathrm dx\,\mathrm dt<\int_0^1 t^\me\norm{  A_tf}_1\cdot \norm{g}_\infty \,\mathrm dt<\infty, \]
\[ \int_1^\infty\int_G\betrag{   t^\me A_t(x)\cdot \overline{g(x)}}\,\mathrm dx\,\mathrm dt<\int_1^\infty t^\me\norm{  A_tf}_\infty\cdot \norm{g}_1 \,\mathrm dt<\infty, \]
that is, $(t,x)\mapsto \betrag{t^\me A_t(x)\cdot \overline{g(x)}}$ is integrable.
Fubini's Theorem yields
\begin{equation}\begin{split}
&\spd{\widetilde Tf}{g} \\
& =\lim_E \int_E \spd{ A_t f}{g} \dht \\
&= \int_0^\infty\int_G   t^\me A_t(x)\cdot \overline{g(x)}\,\mathrm dx\,\mathrm dt \\
&= \int_G \int_0^\infty  t^\me A_t(x)\,\mathrm dt\cdot \overline{g(x)}\,\mathrm dx \\
&=\spd{Tf}{g}, 
\end{split}\end{equation}
showing $\left.\widetilde T\right\vert_\schwartz=T$.
\end{proof}
From now on we denote $\widetilde T$ by $T$ .

\newcommand{\ig}{\frac{\delta}{R}}
\newcommand{\ib}{^c\!\buu_{\kon\delta}(y)}

\section{A space of homogeneous type}
We now define {\em quasi-balls} $\buu\subseteq G$.
For any $r>0$ and $y\in G$ we set 
\[ \buu_r(y)\dd \bigcup_{s\in[-r,r]}B_r\mal(\ort{s}y)
.\] 
Then by \eqref{rotinvariance} we have  $x\in\buu_r(y)\gdw y\in\buu_r(x)\gdw \exists s: \betrag{s}\le r\wedge\betrag{x\cdot \ort{s}y^{-1}}<r$ and
$\ort{s}(\buu_r(y))=\buu_r(\ort{s}y)$. We show that the balls $\buu$ possess the engulfing and doubling
properties as described by Stein in \cite[p. 8]{stein1}.
\begin{theorem} \label{homraum}
There exist constants $C, k>0$ such that for all $x,y\in G$ and $t>0$
\begin{equation} \label{verschluckung}
\buu_t(y)\cap\buu_t(x)\neq\emptyset\folgt \buu_t(y)\subseteq\buu_{k t}(x) ,
\end{equation}
\begin{equation}
\label{doubling} \leb(\buu_{2t}(x))\le C\cdot\leb(\buu_t(x)).
\end{equation}
\end{theorem}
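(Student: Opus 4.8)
The plan is to prove the engulfing property \eqref{verschluckung} by a direct chaining argument with the right-invariant quasi-metric, and to prove the doubling property \eqref{doubling} by decoupling the way $\buu_{2t}$ ``fattens'' relative to $\buu_t$ into an \emph{angular} scaling and a \emph{radial} scaling. Throughout I would write $d(x,y)\dd\betrag{xy^\me}$ for the right-invariant quasi-metric (quasi-triangle constant $\kappa\ge1$), and introduce the two-parameter tubes $\buu_{\rho,\tau}(y)\dd\bigcup_{\betrag s\le\tau}B_\rho\mal(\ort s y)$, so that $\buu_r(y)=\buu_{r,r}(y)$. I will use freely that each $\ort s$ is a group automorphism with $\ort s\ort u=\ort{s+u}$, $\ort s(pq)=\ort s(p)\ort s(q)$ and $\ort s(p^\me)=\ort s(p)^\me$, that $\ort s(B_\rho)=B_\rho$ and $\betrag{\ort s p}=\betrag p$ by \eqref{rotinvariance}, that $\leb$ is invariant under every $\ort s$ by \eqref{rotinv}, and that $\leb(B_\rho\mal z)=C\rho^Q$ for all $z\in G$.

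\emph{Engulfing.} Suppose $z\in\buu_t(y)\cap\buu_t(x)$. First I would write $z=\alpha\,\ort a y=\eta\,\ort b x$ with $a,b\in[-t,t]$ and $\betrag\alpha,\betrag\eta<t$. Applying $\ort{-a}$ to $\ort a y=\alpha^\me\eta\,\ort b x$ gives $y=\xi\,\ort{b-a}x$, where $\xi\dd\ort{-a}(\alpha^\me\eta)$ satisfies $\betrag\xi=\betrag{\alpha^\me\eta}\le\kappa(\betrag\alpha+\betrag\eta)<2\kappa t$. Now for an arbitrary $w=\zeta\,\ort c y\in\buu_t(y)$ (with $c\in[-t,t]$, $\betrag\zeta<t$) I would substitute to obtain $w=\bigl(\zeta\,\ort c(\xi)\bigr)\,\ort{c+b-a}x$; hence, with $d\dd c+b-a$, we get $\betrag d\le3t$ and $\betrag{w\,(\ort d x)^\me}=\betrag{\zeta\,\ort c(\xi)}\le\kappa(\betrag\zeta+\betrag\xi)<\kappa(1+2\kappa)t$, so that $w\in\buu_{kt}(x)$ with $k\dd\max\{3,\ \kappa+2\kappa^2\}$. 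Since $w$ was arbitrary, $\buu_t(y)\subseteq\buu_{kt}(x)$. The essential trick is choosing the angular index $d$ so that the ``$\ort{(\cdot)}y$''-parts cancel exactly, leaving only the small elements $\zeta$ and $\xi$ — bounded via the quasi-triangle inequality and the rotation invariance of $\betrag{\cdot}$ — to enter the estimate.

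\emph{Doubling.} Fix $y$ and $r>0$ and put $E\dd\menge{\ort s y}{\betrag s\le r}$, a compact arc, so $\buu_{\rho,r}(y)=B_\rho\mal E$ for every $\rho>0$. I would argue in two steps. \emph{(i) Angular scaling:} cover $[-2r,2r]$ by the four intervals $[-2r,-r],[-r,0],[0,r],[r,2r]$ with midpoints $\tau_1,\dots,\tau_4$; if $\betrag{s-\tau_i}\le r/2$ then $B_{2r}\mal\ort s y=B_{2r}\mal\ort{s-\tau_i}(\ort{\tau_i}y)\subseteq\buu_{2r,r}(\ort{\tau_i}y)$, so $\buu_{2r}(y)=\buu_{2r,2r}(y)\subseteq\bigcup_{i=1}^{4}\buu_{2r,r}(\ort{\tau_i}y)$, and since $\buu_{2r,r}(\ort{\tau_i}y)=\ort{\tau_i}\bigl(\buu_{2r,r}(y)\bigr)$ with $\leb$ being $\ort{\tau_i}$-invariant, $\leb(\buu_{2r}(y))\le4\,\leb(\buu_{2r,r}(y))$. \emph{(ii) Radial scaling:} the triple $(G,d,\leb)$ is a space of homogeneous type ($d$ a symmetric quasi-metric, $\leb(B_\rho\mal z)=C\rho^Q$), so a standard covering argument yields, for every $S\subseteq G$ and $\rho>0$, the neighbourhood-doubling estimate $\leb(N_{2\rho}(S))\le D\,\leb(N_\rho(S))$, where $N_\rho(S)\dd\menge{x}{d(x,S)<\rho}=B_\rho\mal S$ and $D=(6\kappa^2)^Q$: take a maximal $\rho$-separated subset $\{z_i\}$ of $S$; then the balls $B_{\rho/(2\kappa)}\mal z_i$ are pairwise disjoint and lie in $N_\rho(S)$, while $N_{2\rho}(S)\subseteq\bigcup_i B_{3\kappa\rho}\mal z_i$, and one compares measures using $\leb(B_\rho\mal z)=C\rho^Q$. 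Applying this with $S=E$, $\rho=r$ gives $\leb(\buu_{2r,r}(y))=\leb(N_{2r}(E))\le D\,\leb(N_r(E))=D\,\leb(\buu_r(y))$. Combining (i) and (ii) proves \eqref{doubling} with $C=4D$.

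I expect the doubling property, and within it step (ii), to be the main obstacle: one cannot relate $\buu_{2t}(y)$ to $\buu_t(y)$ by an engulfing-type inclusion, since for large $\betrag y$ the arc $\menge{\ort s y}{\betrag s\le2t}$ need not lie within $d$-distance $O(t)$ of the arc $\menge{\ort s y}{\betrag s\le t}$, so the fattening really does increase the measure. Decoupling it into the purely combinatorial angular step and the radial step — the latter being nothing but neighbourhood-doubling in the space of homogeneous type $(G,d,\leb)$, applied to the arc $E$ — is what makes the estimate go through; as in the engulfing argument and in step (i), the only place where care is needed is in matching up the various rotation indices using the group law.
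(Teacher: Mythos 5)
Your proof is correct, and it arrives at \eqref{verschluckung} and \eqref{doubling} with explicit constants ($k=\max\{3,\kappa+2\kappa^2\}$ and $C=4\cdot(6\kappa^2)^Q$). The engulfing argument is essentially the same as the paper's: you derive the engulfing property for the ordinary right-balls $B_t\mal z$ inline from the quasi-triangle inequality and then transport it to the tubes $\buu_t$ by matching the rotation indices, whereas the paper cites the $B$-engulfing as known with some $k\ge3$ and then performs the same index-matching step; this is a presentational difference only.

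The doubling argument, while sharing the paper's overall strategy of decoupling angular and radial fattening, differs in the radial step. The paper does the radial step first: it picks finitely many $a_1,\dots,a_l$ with $B_2\subseteq\bigcup_k a_k\mal B_1$ (pure compactness of $\overline{B_2}$), then scales by $D_t$ to obtain $B_{2t}\subseteq\bigcup_k(D_ta_k)\mal B_t$, and removes the left factors by left-invariance of Haar measure; the angular part is then handled by covering $[-2t,2t]$ by two shifts of $[-t,t]$ and appealing to rotation invariance of $\leb$, giving constant $2l$. You instead do the angular part first (four shifts of $[-r,r]$ by $\tau_i$, with $\leb$-invariance of $\ort{\tau_i}$), and then run a Vitali-type maximal $\rho$-separated subset argument on the compact arc $E$ to prove neighbourhood-doubling $\leb(B_{2r}\mal E)\le D\,\leb(B_r\mal E)$. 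Both radial mechanisms are standard; the paper's is slightly lighter because it exploits the exact dilation structure $B_{2t}=D_tB_2$ and left-invariance of $\leb$, so no separated-set bookkeeping is needed, while yours is more robust in that it is the argument that works for an arbitrary space of homogeneous type, not only one with a dilation group. Your remark that the doubling for $\buu$ cannot be reduced to engulfing — because for large $\betrag y$ the $2t$-arc is far from the $t$-arc in the quasi-metric $d(x,y)=\betrag{xy^\me}$ — correctly identifies why the angular/radial decoupling is necessary.
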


\begin{proof}
Choose $a_1,\ldots,a_l\in G$ such that  $B_2\subseteq \bigcup_{k=1}^l
 a_k B_1$.
These exist, since $B_2$ is relatively compact and $\{ z\mal B_1 \}_{z\in G}$ is an
open covering of $\overline{B_2}$. It follows that $B_{2t} =D_t B_2 \subseteq
D_t(\bigcup_{k=1}^l a_k B_1) \subseteq \bigcup_{k=1}^l  (D_t a_k)(D_t B_1)
=\bigcup_{k=1}^l (D_t a_k) B_t $. Finally we obtain
\[
\begin{split}
\leb(\buu_{2t}(x)) 
&= \leb(
 B_{2t}\mal\menge{\ort{s}x}{s\in[-2t,2t]}
)  \\
&\le \leb\left(\left(\bigcup_{k=1}^l (D_t a_k)\mal B_t\right) \mal \left(
\bigcup_{\sigma=\pm t}\menge{\ort{s+\sigma}x}{\betrag{s}\le t}
\right) \right) \\
&\le \leb\left( \bigcup_{k=1}^l \bigcup_{\sigma=\pm t} (D_t a_k) \mal B_t
\mal
\menge{\ort{s+\sigma}x}{\betrag{s}\le t} \right) \\
&\le \sum_{k=1}^l \sum_{\sigma=\pm t} \leb \left(\  (D_t a_k) \mal B_t
\mal
\menge{\ort{s+\sigma}x}{\betrag{s}\le t}\  \right) \\
&\le \sum_{k=1}^l \sum_{\sigma=\pm t} \leb\left( \ \ort{-\sigma}(B_t \mal
\menge{\ort{s+\sigma}x}{\betrag{s}\le t}) \  \right) \\
&\le \sum_{k=1}^l \sum_{\sigma=\pm t} \leb(\  B_t \mal
\menge{\ort{s}x}{\betrag{s}\le t} \ )\\
&= 2l \leb(\buu_t(x)).
\end{split} \]
This is the doubling property (\ref{doubling}). The engulfing property \eqref{verschluckung} is known to be true when the quasi-balls
 $\buu_t$ are replaced by the simpler quasi-balls $B_t$. So we may choose
a constant $k\ge 3$ such that \eqref{verschluckung} holds with $B_t$ instead of
$\buu$, and proceed to prove \eqref{verschluckung}.

Now assume that $\buu_t(x)\cap\buu_t(y)\neq\emptyset$. Choose $a,b\in [-t,t]$ such that
$B_t\mal(\ort{a}x)\cap B_t\mal(\ort{b}y)\neq\emptyset$.
Property \eqref{verschluckung} with $B$ instead of $\buu$ yields 
$ B_t(\ort{a}x)\subseteq B_{kt}(\ort{b}y)$. 
Let $s\in [-t,t]$. Rotating both sets with $\ort{s-a}$ and keeping in mind that
$\betrag{s-a+b}\le 3t\le kt$, we conclude that
$ B_t\cdot (\ort{s}x)\subseteq B_{kt}(\ort{s-a+b}y)
\subseteq \buu_{\kon t}(y)$,  that is, $\buu_t(x) \subseteq \buu_{k t}(y)$. 
\end{proof}

\begin{corollary}
The Hardy-Littlewood
maximal operator
\[ (Mf)(x) \dd \sup_{r>0}\frac{1}{\leb({\buu_r(x))}}
\int_{\buu_r(x)}\betrag{f(y)}\,dy\] is of weak type (1,1). 
\end{corollary}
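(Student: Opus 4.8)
\emph{Plan of proof.} The assertion is the Hardy--Littlewood maximal theorem for the space of homogeneous type furnished by Theorem~\ref{homraum}, and I would establish it by the classical Vitali-type covering argument, with the engulfing property \eqref{verschluckung} playing the role of the quasi-triangle inequality and the doubling property \eqref{doubling} controlling the measures of enlarged quasi-balls. Two elementary observations set the stage. First, directly from $\buu_r(y)=\bigcup_{\betrag s\le r}B_r\mal(\ort s y)$ one reads off the monotonicity $\buu_s(y)\subseteq\buu_t(y)$ for $0<s\le t$, together with the volume lower bound $\leb(\buu_r(y))\ge\leb(B_r\mal y)=\leb(B_r)\simeq r^Q$. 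Second, combining monotonicity with \eqref{verschluckung} upgrades the latter to a \emph{comparable-radius} engulfing statement: if $\buu_s(y)\cap\buu_t(x)\neq\emptyset$ with $s\le t$, then $\buu_t(y)\cap\buu_t(x)\neq\emptyset$, so $\buu_s(y)\subseteq\buu_t(y)\subseteq\buu_{kt}(x)$.

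Next I would fix $f\in\LP^1(G)$ and $\lambda>0$ and set $E_\lambda\dd\menge{x\in G}{(Mf)(x)>\lambda}$ ($Mf$ is measurable by a routine argument). For each $x\in E_\lambda$ choose a radius $r(x)>0$ with $\int_{\buu_{r(x)}(x)}\betrag f\,dy>\lambda\,\leb(\buu_{r(x)}(x))$; in particular $\leb(\buu_{r(x)}(x))<\norm f_1/\lambda$, so the volume lower bound gives a uniform bound $r(x)\le R_0\dd(C\norm f_1/\lambda)^{1/Q}$. From the family $\menge{\buu_{r(x)}(x)}{x\in E_\lambda}$ I would extract a countable pairwise disjoint subfamily $\{\buu_{r_j}(x_j)\}_j$ by the usual greedy selection (at each step take, among the members disjoint from those already chosen, one whose radius exceeds half the supremum of the still-admissible radii). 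Disjointness and the witnessing inequality give $\sum_j\leb(\buu_{r_j}(x_j))\le\lambda^{-1}\sum_j\int_{\buu_{r_j}(x_j)}\betrag f\,dy\le\norm f_1/\lambda<\infty$, so the subfamily is indeed countable and, if infinite, has $r_j\to0$; hence no member of the original family can be disjoint from every $\buu_{r_j}(x_j)$, and each $\buu_{r(x)}(x)$ meets some $\buu_{r_j}(x_j)$ with $r_j>r(x)/2$. By the comparable-radius engulfing statement $\buu_{r(x)}(x)\subseteq\buu_{2kr_j}(x_j)$, and therefore $E_\lambda\subseteq\bigcup_j\buu_{2kr_j}(x_j)$.

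Finally, iterating \eqref{doubling} a fixed number $m\dd\lceil\log_2(2k)\rceil$ of times yields $\leb(\buu_{2kr_j}(x_j))\le C^m\leb(\buu_{r_j}(x_j))$, whence
\[ \leb(E_\lambda)\le\sum_j\leb(\buu_{2kr_j}(x_j))\le C^m\sum_j\leb(\buu_{r_j}(x_j))\le\frac{C^m}{\lambda}\sum_j\int_{\buu_{r_j}(x_j)}\betrag f\,dy\le\frac{C^m}{\lambda}\norm f_1, \]
which is the desired weak-type $(1,1)$ estimate. The computation is essentially routine given Theorem~\ref{homraum}; the only points that need attention are the passage from the equal-radius engulfing property to its comparable-radius form (handled by monotonicity of $\buu_r$ in $r$) and the verification that the greedy selection exhausts the covering family, which rests on the finiteness of $\sum_j\leb(\buu_{r_j}(x_j))$ obtained above. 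Alternatively, one could simply invoke the general Hardy--Littlewood maximal theorem for spaces of homogeneous type, since Theorem~\ref{homraum} provides exactly its hypotheses.
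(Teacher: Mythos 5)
Your proposal is correct, and it is essentially the approach the paper intends: the paper's entire ``proof'' is the single sentence ``For a proof, see \cite{stein1} for example,'' and your Vitali-type covering argument (monotonicity of $\buu_r$ in $r$, the comparable-radius upgrade of the engulfing property \eqref{verschluckung}, greedy disjoint selection, and iterated doubling \eqref{doubling}) is precisely the standard argument that citation supplies for a space of homogeneous type. Your closing alternative --- ``simply invoke the general Hardy--Littlewood maximal theorem for spaces of homogeneous type, since Theorem~\ref{homraum} provides exactly its hypotheses'' --- is, verbatim, what the paper does.
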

For a proof, see \cite{stein1} for example.
The quasi-balls $\buu$ define a  quasi-metric
\begin{equation}\label{quasi-metric}
 d(x,y)\dd \inf \menge{r>0}{x\in\buu_r(y) }.
\end{equation}
This quasi-metric yields a  space of homogeneous type in the sense of Coifman and Weiss \cite{CW}.
Note that for any quasi-metric there are constants  $C_1,C_2,C_3\ge 1$ such that for all $x,y,\bar y$ we have
\begin{equation}\label{distequiv}
C_1\cdot d(y,\bar y)<d(x,y)\folgt d(x,y)\le C_2 d(x,\bar y)\le C_3 d(x,y).
\end{equation}

\section{The integral kernel}\label{sec:kern}
In this section we study singular integral kernels $K(x,y)$ related to $T$. 
Let $\eta:G\to\C$ be a continuous function such that $\norm{\eta}_\infty=C_1<\infty$ and $\sup_x \betrag{\eta(x)\betrag{x}^N}=C_2<\infty$ for some $N>Q$.
\begin{lemma}\label{lemma:kernelpointwise}
 The integral
\begin{equation}\label{kerneleta}
K_\eta(x,y)=\int_0^\infty \eta_t(x\cdot\ortn y^{-1})\dht\kq x\neq y,\ x,y\in G
\end{equation}
converges and defines a continuous function $K:G\times G\setminus\menge{(x,x)}{x\in G}\to\C$. There exists a constant $C$ such that
\begin{equation}\label{kernelpointwise}
\betrag{K_\eta(x,y)}\le C\cdot [d(x,y)]^{-Q}
\end{equation}
for all $x, y\in G$, $x\neq y$. The estimate \eqref{kernelpointwise} remains true if in \eqref{kerneleta}  $\ortn$ is replaced by $\ortp$ or $x\cdot\ortn y^{-1}$ by $(\ortn x)\cdot y^{-1}$. 
\end{lemma}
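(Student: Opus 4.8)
The plan is to prove convergence of the integral \eqref{kerneleta} and the pointwise bound \eqref{kernelpointwise} by splitting the $t$-integral at the scale $r\df d(x,y)$ and estimating the two pieces using the decay hypotheses on $\eta$ together with the geometry of the quasi-balls $\buu$. Write $w_t\df x\cdot\ortn y\inv$ for the argument, so that $\eta_t(w_t)=t^{-Q}\eta(\dil_{t\inv}w_t)$. For the \emph{large-$t$} part, $\int_r^\infty$, I would use the crude bound $\betrag{\eta_t(z)}\le\norm{\eta}_\infty\,t^{-Q}=C_1t^{-Q}$ for every $z$, giving $\int_r^\infty C_1 t^{-Q}\dht = C_1 Q\inv r^{-Q}$. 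For the \emph{small-$t$} part, $\int_0^r$, I would use the decay hypothesis $\betrag{\eta(z)}\le C_2\betrag{z}^{-N}$ to get $\betrag{\eta_t(w_t)}\le C_2 t^{-Q}\betrag{\dil_{t\inv}w_t}^{-N}=C_2 t^{-Q}(t\inv\betrag{w_t})^{-N}=C_2 t^{N-Q}\betrag{w_t}^{-N}$, and then $\int_0^r C_2 t^{N-Q}\betrag{w_t}^{-N}\dht$. The key geometric input is that on the range $0<t\le r=d(x,y)$ one has $\betrag{w_t}=\betrag{x\cdot\ortn y\inv}\gtrsim r$: indeed if $\betrag{t}\le r$ and $\betrag{x\cdot\ortn y\inv}<r$ then $x\in\buu_r(y)$, so $d(x,y)\le r$ cannot be strict; more carefully, $d(x,y)=r$ means that for all $t$ with $\betrag t<r$ we have $\betrag{x\cdot\ortn y\inv}\ge r$ (up to a quasi-metric constant absorbed into $C$). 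Hence on $0<t\le r$, $\betrag{w_t}^{-N}\le C r^{-N}$, and $\int_0^r C_2 C r^{-N} t^{N-Q}\dht = C' r^{-N}\cdot r^{N-Q}=C' r^{-Q}$, since $N>Q$ makes the integral converge at $0$. Adding the two pieces gives $\betrag{K_\eta(x,y)}\le C[d(x,y)]^{-Q}$, and the same splitting shows the integral converges absolutely for $x\neq y$.

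For continuity of $K_\eta$ on the complement of the diagonal, I would fix a compact neighborhood of a point $(x_0,y_0)$ with $x_0\neq y_0$, on which $d(x,y)$ is bounded below by some $r_0>0$; the estimates above then provide an integrable dominating function $g(t)=C_1 t^{-Q}\mathbf 1_{[r_0,\infty)}(t)+C_2 C r_0^{-N}t^{N-Q}\mathbf 1_{(0,r_0]}(t)$ independent of $(x,y)$ in that neighborhood, while the integrand $(x,y,t)\mapsto\eta_t(x\cdot\ortn y\inv)$ is continuous (continuity of $\eta$, of the group operations, and of $t\mapsto\ort{t}$). Dominated convergence then yields continuity of $K_\eta$.

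Finally, for the two variants: replacing $\ortn$ by $\ortp$ changes nothing in the argument, since only $\betrag s\le t$ (equivalently $\betrag{-t}\le t$) and the rotation-invariance \eqref{rotinvariance} of $\betrag\cdot$ were used, and both $\ortn$ and $\ortp$ are rotations with the same $\betrag t$. Replacing $x\cdot\ortn y\inv$ by $(\ortn x)\cdot y\inv$ is handled the same way: note $(\ortn x)\cdot y\inv=\ortn(x\cdot\ort{t}y\inv)$ since $\ortn$ is a group automorphism, so by \eqref{rotinvariance} $\betrag{(\ortn x)\cdot y\inv}=\betrag{x\cdot\ort{t}y\inv}$, and the membership $x\in\buu_r(y)$ is by definition an existential statement over $\betrag s\le r$ that is symmetric under $s\mapsto -s$ (using $\betrag{x\cdot\ort{s}y\inv}$); hence the same lower bound $\betrag{(\ortn x)\cdot y\inv}\gtrsim d(x,y)$ holds on $0<t\le d(x,y)$, and the estimate goes through verbatim.

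I expect the main obstacle to be the geometric lower bound $\betrag{x\cdot\ortn y\inv}\gtrsim d(x,y)$ valid uniformly for $0<t\le d(x,y)$ — i.e., extracting the right inequality from the definition \eqref{quasi-metric} of $d$ via the quasi-balls $\buu$, being careful about whether one gets $\ge d(x,y)$ exactly or only up to a constant (which requires using \eqref{distequiv} or the engulfing property \eqref{verschluckung} from Theorem \ref{homraum}). Everything else is a routine splitting of a scale integral using the hypotheses $\norm\eta_\infty<\infty$ and $\betrag{\eta(x)}\lesssim\betrag x^{-N}$ with $N>Q$.
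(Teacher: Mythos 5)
Your proof is correct and takes essentially the same approach as the paper: split the $t$-integral at the scale $r=d(x,y)$, bound the tail $t>r$ by $\norm{\eta}_\infty t^{-Q}$, bound the piece $t\le r$ by combining the decay $\betrag{\eta(z)}\lesssim\betrag{z}^{-N}$ with the geometric lower bound $\betrag{x\cdot\ortn y\inv}\ge d(x,y)$ for $0\le t<d(x,y)$, and then obtain continuity by dominated convergence against a local majorant. One remark: the geometric lower bound is exact, not merely up to a multiplicative constant --- if $\betrag{t}<d(x,y)$ and $\betrag{x\cdot\ortn y\inv}<d(x,y)$, one could take $r'$ strictly between $\max\{\betrag t,\betrag{x\cdot\ortn y\inv}\}$ and $d(x,y)$ and conclude $x\in\buu_{r'}(y)$, contradicting the infimum in \eqref{quasi-metric} --- so the hedge about a quasi-metric constant and the appeal to \eqref{distequiv} or the engulfing property are unnecessary.
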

\begin{proof}
Assume that $x_0,y_0\in G$ and that $d(x_0,y_0)=\epsilon>0$. Then for any $0\le t\le\epsilon$ we have
\begin{equation*} 
\betrag{x_0\cdot\ortn y_0^{-1}}\ge \epsilon
\end{equation*}
and there is a $\delta>0$ such that 
\begin{equation} \label{argestimate}
\betrag{x\cdot\ortn y^{-1}}\ge \frac{\epsilon}{2} \quad \text{for all}\ x\in B_\delta\cdot x_0, \   y\in B_\delta\cdot y_0 \ \text{ and }\ 0\le t\le\epsilon.
\end{equation}
We will construct a function $H:\R^+\to\R^+$ such that
\begin{equation} \label{majorant}
\betrag{\eta_t(x\cdot\ortn y^{-1})}\le H(t)\quad \text{for all}\ x\in B_\delta\cdot x_0, \ \;  y\in B_\delta\cdot y_0 \ \text{ and }\ 0\le t\le \epsilon
\end{equation}
and such that
\begin{equation}\label{majorantestimate}
\int_0^\infty H(t)\dht \le C \epsilon^{-Q},
\end{equation}
where $C$ is a constant depending on $\eta$, but not on $x$ or $y$.
Then the estimate \eqref{kernelpointwise} is obvious. Furthermore, the convergence of the integral and the continuity of $K$ follow by the dominated convergence theorem. To prove \eqref{majorantestimate}, note that
$\eta_t(x\cdot \ortn y^{-1})=t^{-Q}\eta\krng h(t)$, where $h(t)=\dil_{t^{-1}}(x\cdot \ortn y^{-1})$. 
Because of \eqref{argestimate}, 
  $\betrag{h}$ is bounded  from below by $\betrag {h(t)}=t^{-1}\betrag{x\cdot \ortn y^{-1}}\ge \halb t^{-1}\epsilon\ge \halb$ for $0\le t\le\epsilon$, and $\eta$ is bounded from above by
\begin{equation}
\betrag{\eta\krng h(t)}\le \frac{C_2}{\betrag{h(t)}^N}\le C_2 2^N\cdot t^N\cdot \epsilon^{-N}. 
\end{equation}
Then $H(t)=t^{-Q}\begin{cases} C_1& t>\epsilon\\ C_2 2^N t^N\epsilon^{-N} & t\le\epsilon\end{cases}$ yields
 \eqref{majorant} and \eqref{majorantestimate}, which finishes the proof of Lemma \ref{lemma:kernelpointwise}.
\end{proof}

For example, $\eta=\psi$ and $\eta=\betrag{\psi}$ satisfy the assumptions of Lemma \ref{lemma:kernelpointwise}. Note that \eqref{kernelpointwise} is weaker than the condition 
\[\betrag{K(x,y)}\le \frac{C}{\leb(\buu_{d(x,y)}(y))} \]
required for standard Calder\'on-Zygmund kernels, see Sect. \ref{section:standardkernels}. 

The pointwise estimate \eqref{kernelpointwise} does not suggest integrability of $K(\cdot,y)$:
\[\int_G[d(x,y)]^{-Q}\,\mathrm dx\ge \int_G \betrag{xy^\me}^{-Q}\,\mathrm dx=C\int_0^\infty r^{-Q} r^{Q-1}\,\mathrm dr =\infty.\]
Nonetheless, if $K_\eta=\,^\epsilon\!K_\eta+ K^\epsilon_\eta$, where
\begin{equation}\label{kernelepsilon1}
^\epsilon\! K_\eta(x,y)=\int_0^\epsilon \eta_t(x\cdot\ortn y^{-1})\dht\kq x\neq y,\ x,y\in G,
\end{equation}
\begin{equation}\label{kernelepsilon2}
 K^\epsilon_\eta(x,y)=\int_\epsilon^\infty \eta_t(x\cdot\ortn y^{-1})\dht\kq x,y\in G,
\end{equation}
then $^\epsilon\! K_\eta(\,\cdot\,,y_0)$ is a function that is integrable at infinity and $K^\epsilon_\eta$ is a bounded function such that for every $y_0$ the kernel satisfies some estimate $K^\epsilon_\eta(x,y_0)\lesssim\betrag{x}^{-Q}$ as $x$ tends to infinity.
If $\eta\ge0$ and $\eta(0)>0$, then even $K^\epsilon_\eta(x,y_0)\gtrsim\betrag{x}^{-Q}$ .
\begin{lemma} \label{epskern}
There is a constant $C$ such that for all $y$, $\eta$,  and $\epsilon$, we have
\begin{equation*}
\int_{d(x,y)>\epsilon}\betrag{\, ^\epsilon\!K_\eta(x,y)}\,\mathrm dx \le C\cdot \sup_x \betrag{\eta(x)\betrag{x}^N}.
\end{equation*}

\end{lemma}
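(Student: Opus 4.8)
The plan is to split the region of integration $\{d(x,y)>\epsilon\}$ dyadically and use the pointwise bound on $\eta_t$ coming from its decay hypothesis, together with the fact that the $t$-integral in ${}^\epsilon\!K_\eta$ only runs up to $\epsilon$. Fix $y$; by right-invariance of $\mu$ and the rotation-invariance \eqref{rotinv} we may as well normalize the computation so that the relevant quantity is $\betrag{x\cdot\ortn y^{-1}}$, which for $0\le t\le\epsilon$ and $d(x,y)>\epsilon$ is bounded below by a constant multiple of $d(x,y)$ (this is exactly the kind of estimate already extracted in the proof of Lemma \ref{lemma:kernelpointwise}, and one should quote it). Write $C_2\df\sup_x\betrag{\eta(x)\betrag{x}^N}$, so that $\betrag{\eta(z)}\le C_2\betrag{z}^{-N}$ for all $z\neq 0$, with $N>Q$.

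First I would record the pointwise bound on the truncated kernel: for $d(x,y)>\epsilon$,
\[
\betrag{{}^\epsilon\!K_\eta(x,y)}\le \int_0^\epsilon t^{-Q}\betrag{\eta(\dil_{t^{-1}}(x\cdot\ortn y^{-1}))}\dht
\le C_2\int_0^\epsilon t^{-Q}\,\frac{t^N}{\betrag{x\cdot\ortn y^{-1}}^N}\dht
\le C\,C_2\,\frac{\epsilon^{N-Q}}{d(x,y)^N},
\]
where in the last step I used $\betrag{x\cdot\ortn y^{-1}}\gtrsim d(x,y)$ and integrated $t^{N-Q-1}$ from $0$ to $\epsilon$ (here $N-Q>0$ makes the integral finite and equal to a constant times $\epsilon^{N-Q}$). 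Then I would integrate this in $x$ over $\{d(x,y)>\epsilon\}$. Using the annular decomposition $\{2^{j}\epsilon< d(x,y)\le 2^{j+1}\epsilon\}$ for $j\ge 0$, and the doubling property \eqref{doubling} together with the volume growth $\leb(\buu_r(y))\simeq$ (something like) $r^Q$ implicit in the space-of-homogeneous-type structure — more simply, one may compare $d(x,y)$ with $\betrag{xy^{-1}}$ up to constants and use $\leb(\{\betrag{xy^{-1}}\le R\})=CR^Q$ from the displayed formula in Section on Integration — the measure of the $j$-th annulus is $\lesssim (2^j\epsilon)^Q$. Hence
\[
\int_{d(x,y)>\epsilon}\betrag{{}^\epsilon\!K_\eta(x,y)}\,\mathrm dx
\lesssim C_2\sum_{j\ge 0}(2^j\epsilon)^Q\cdot\frac{\epsilon^{N-Q}}{(2^j\epsilon)^N}
= C_2\sum_{j\ge 0}2^{j(Q-N)} = C\,C_2,
\]
the geometric series converging precisely because $N>Q$, and the final constant being independent of $y$, $\eta$, and $\epsilon$.

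The main obstacle is not the dyadic summation but making the two geometric inputs precise and uniform: namely (i) that $\betrag{x\cdot\ortn y^{-1}}\ge c\,d(x,y)$ holds uniformly in $t\in[0,\epsilon]$ whenever $d(x,y)>\epsilon$, which follows from unwinding the definition \eqref{quasi-metric} of $d$ via the quasi-balls $\buu$ and the rotation-invariance \eqref{rotinvariance} of $\betrag{\cdot}$ — essentially $d(x,y)<r$ forces $\betrag{x\cdot\ortn y^{-1}}<r$ for $t\le r$, contrapositively giving the bound; and (ii) that the $x$-measure of a $d$-annulus of radius $R$ is $\lesssim R^Q$ uniformly in the center, which is the doubling/growth property of the homogeneous-type space from Theorem \ref{homraum} combined with the exact volume formula $\int_{B_r\cdot z}\mathrm dx=Cr^Q$. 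Once these are in hand, everything else is the routine estimate sketched above, and the hypothesis $N>Q$ is used exactly twice — to make the inner $t$-integral converge at $0$ and to make the outer dyadic sum converge at infinity.
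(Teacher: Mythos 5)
Your pointwise bound on the truncated kernel,
\[
\betrag{{}^\epsilon\!K_\eta(x,y)}\lesssim C_2\,\frac{\epsilon^{N-Q}}{d(x,y)^N},
\]
is correct (the inequality $\betrag{x\cdot\ortn y^{-1}}\ge d(x,y)$ for $0\le t\le\epsilon<d(x,y)$ does follow from the definition of $d$ via the quasi-balls $\buu$). However, the step that integrates this bound over the region $\{d(x,y)>\epsilon\}$ contains a genuine gap. You need, uniformly in $y$, that the measure of the dyadic annulus $\{2^j\epsilon<d(x,y)\le 2^{j+1}\epsilon\}$ is $\lesssim(2^j\epsilon)^Q$; equivalently, that $\leb(\buu_R(y))\lesssim R^Q$ with a constant independent of $y$. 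This is false: Lemma \ref{volbal} shows $\leb(\buu_R(y))\simeq R^Q + R^{Q-a}\betrag{v}\min\{1,R\}$, which for fixed $R$ grows without bound as $\betrag{v}\to\infty$. The two justifications you offer for the volume bound are both unsound: the quasi-balls $\buu_R(y)$ in a general space of homogeneous type need not have volume $\simeq R^Q$, and the quasi-metric $d(x,y)$ is \emph{not} comparable to $\betrag{xy^{-1}}$ — one only has $d(x,y)\lesssim\betrag{xy^{-1}}$ (take $s=0$ in the definition of $\buu_r$), while the reverse fails badly when $x$ and $y$ lie on the same large orbit of the rotations, e.g.\ $y=(R,0)$ and $x=O_s y$ for moderate $s$ and large $R$. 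Consequently, following your route literally, the final estimate acquires a factor growing like $\betrag{v}$, contradicting the uniformity asserted in the lemma. The failure is not in the arithmetic of the dyadic sum but in the passage from the $t$-integral to a pointwise majorant: replacing $\betrag{x\cdot\ortn y^{-1}}$ by $d(x,y)$ for every $t\in[0,\epsilon]$ is an over-simplification that throws away essential cancellation; for most $x$ in a given annulus, $\betrag{x\cdot\ortn y^{-1}}$ is actually comparable to $\betrag{y}$ rather than to $d(x,y)$.

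The paper's proof avoids this trap by not taking a pointwise majorant at all. It applies Tonelli to move the $x$-integration inside the $t$-integral, then changes variables $z=x\cdot\ortn y^{-1}$ (a measure-preserving substitution since $\ortn\in\mathrm{SO}(n)$). The constraint $d(x,y)>\epsilon$, $t\le\epsilon$ only enlarges the domain to $\{\betrag{z}\ge\epsilon\}$, and all dependence on $y$ and on the rotations disappears; the remaining estimate is an elementary radial integral $\int_0^\epsilon\int_{\epsilon/t}^\infty r^{-N}r^{Q-1}\,\mathrm dr\dht\lesssim 1$, using $N>Q$ twice, as in your sketch. If you want to keep an annular-decomposition picture, you must decompose in the variable $z=x\cdot\ortn y^{-1}$ for fixed $t$ rather than in $d(x,y)$ for fixed $x$.
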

\begin{proof} Without loss of generality we assume $\eta\ge 0$ and $C_2$ as before.
Assume that $0<\epsilon\le d(x,y)$ and $0<t<\epsilon$. 
Then we have $\betrag{x\cdot\ortn y^\me}\ge \epsilon$. 
Substituting $z$ for $x\cdot\ortn y^\me$, we obtain $\betrag{z}\ge \epsilon$ and, using spherical coordinates \eqref{spherical}, we obtain the estimate
\begin{equation*}\begin{split}
&\int_{d(x,y)>\epsilon}\int_0^\epsilon \eta_t(x\cdot\ortn y^{-1})\dht\,\mathrm dx\\
&\le\int_0^\epsilon \int_{\betrag{z}\ge\epsilon}\eta_t(z)\,\mathrm dz\dht\\
&\le\int_0^\epsilon \int_{\betrag{z}\ge\frac{\epsilon}{t}}\eta(z)\,\mathrm dz\dht\\
&\le \sigma(S_1) \int_0^\epsilon \int_{\frac{\epsilon}{t}}^\infty \frac{C_2}{r^N} r^{Q-1} \,\mathrm dr\dht\\
&\le C\cdot C_2 \int_0^\epsilon  \left(\frac{\epsilon}{t}\right)^{Q-N} \dht\ \\
&\le C\cdot C_2.
\end{split}\end{equation*}
\end{proof}

\begin{lemma} The operator $T$ is expressible as a singular integral as follows.
 For all $f\in\LP^2(G)$ with compact support and all $x\in G\setminus \supp f$, the integrals
\begin{equation*}
\label{opint}
Tf(x) = \int_G K_\psi(x,y)f(y)\,\mathrm dy \quad \text{and}\quad 
T^\ast f(x) = \int_G \overline{K_\psi(y,x)}f(y)\,\mathrm dy
\end{equation*}
converge and equality holds for almost all $x\in G\setminus
\supp f$.
\end{lemma}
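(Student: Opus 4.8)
The plan is to realise $T$ on $\LP^2(G)$ through the weak-operator limit of Theorem~\ref{opdef}, to rewrite each $A_t$ as an integral operator after absorbing the rotation by the change of variables \eqref{rotinv}, and then to interchange the $\dht$-integration with the two spatial integrations by Fubini, the interchange being licensed by the pointwise kernel bound \eqref{kernelpointwise} applied to $\eta=\betrag\psi$. Concretely, for $f\in\LP^1(G)\cap\LP^2(G)$ and $t>0$,
\[
A_tf(x)=\bigl(\psi_t\falt(f\krng\ortp)\bigr)(x)=\int_G\psi_t(xy^{-1})f(\ortp y)\,\mathrm dy=\int_G\psi_t\bigl(x\cdot\ortn(y^{-1})\bigr)f(y)\,\mathrm dy ,
\]
the last step being the substitution $z=\ortp y$, which preserves Lebesgue measure by \eqref{rotinv} (since $\ortp\in\mathrm{SL}(n)$), together with $(\ortn z)^{-1}=\ortn(z^{-1})$ because $\ortn\in\aut G$. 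Comparing with \eqref{kerneleta}, the $\dht$-integral of this kernel is exactly $K_\psi(x,y)$, and replacing $\psi$ by $\betrag\psi$ (which satisfies the hypotheses of Lemma~\ref{lemma:kernelpointwise}) gives, by \eqref{kernelpointwise},
\[
\int_0^\infty\betrag{\psi_t\bigl(x\cdot\ortn(y^{-1})\bigr)}\,\dht=K_{\betrag\psi}(x,y)\le C\,d(x,y)^{-Q}\kq x\neq y .
\]

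Now fix $f\in\LP^2(G)$ with compact support, so that $f\in\LP^1\cap\LP^2$. Since the quasi-balls contract to their centres, $\bigcap_{r>0}\buu_r(y)=\{y\}$, a short compactness argument shows that any compact $L\subseteq G\setminus\supp f$ has $\delta:=d(L,\supp f)>0$; letting $L$ exhaust $G\setminus\supp f$ this also yields $d(x,\supp f)>0$ for every $x\notin\supp f$, so that $\int_G K_\psi(x,y)f(y)\,\mathrm dy$ converges absolutely for all such $x$ (being dominated by $C\,d(x,\supp f)^{-Q}\norm f_1$), and likewise $\int_G\overline{K_\psi(y,x)}f(y)\,\mathrm dy$ by symmetry of $d$. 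Take $g\in\LP^2$ with $\supp g\subseteq L$. Applying the above kernel bound on $\supp f\times\supp g$ gives the master estimate
\[
\int_0^\infty\!\!\int_G\!\!\int_G\betrag{\psi_t\bigl(x\cdot\ortn(y^{-1})\bigr)\,f(y)\,g(x)}\,\mathrm dy\,\mathrm dx\,\dht=\int_G\!\!\int_G K_{\betrag\psi}(x,y)\betrag{f(y)}\betrag{g(x)}\,\mathrm dy\,\mathrm dx\le C\,\delta^{-Q}\norm f_1\norm g_1<\infty ,
\]
so in particular $t\mapsto\spd{A_tf}{g}=\int_G\int_G\psi_t(x\cdot\ortn(y^{-1}))f(y)\overline{g(x)}\,\mathrm dy\,\mathrm dx$ lies in $\LP^1(\mathrm dt/t)$; hence by Theorem~\ref{opdef} the net defining $\spd{Tf}{g}$ converges to $\int_0^\infty\spd{A_tf}{g}\dht$, which Fubini turns into $\int_G\bigl(\int_G K_\psi(x,y)f(y)\,\mathrm dy\bigr)\overline{g(x)}\,\mathrm dx$. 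As $g$ ranges over $\LP^2(L)$ this forces $Tf(x)=\int_G K_\psi(x,y)f(y)\,\mathrm dy$ for a.e.\ $x\in L$ (both sides restricted to $L$ lie in $\LP^2(L)$, the right-hand one because $K_\psi$ is continuous off the diagonal), and letting $L$ exhaust $G\setminus\supp f$ gives the first identity. For $T^\ast$ I would argue identically, using that adjunction is weak-operator continuous and commutes with $\int_E(\cdot)\dht$, so $T^\ast$ is the weak-operator limit of $\int_E A_t^\ast\dht$; then $\spd{A_t^\ast f}{g}=\overline{\spd{A_tg}{f}}=\int_G\int_G\overline{\psi_t(x\cdot\ortn(y^{-1}))}\,f(x)\overline{g(y)}\,\mathrm dy\,\mathrm dx$, the same master estimate applies since $\betrag{\overline{\psi_t}}=\betrag{\psi_t}$, and one reads off $T^\ast f(x)=\int_G\overline{K_\psi(y,x)}f(y)\,\mathrm dy$ for a.e.\ $x\notin\supp f$.

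The step I expect to be the main obstacle is exactly this interchange of integrals: a priori $t\mapsto\spd{A_tf}{g}$ is only known to be bounded, not $\dht$-integrable, and what rescues the argument is the disjointness of the supports together with the pointwise kernel estimate \eqref{kernelpointwise} for $\betrag\psi$, which upgrades it to a genuine element of $\LP^1(\mathrm dt/t)$ and makes Tonelli and Fubini applicable. The only other point requiring a little care is the elementary geometric fact that $d(x,\supp f)>0$ whenever $x\notin\supp f$, which rests on the compactness of $\supp f$ and on the contraction of the quasi-balls $\buu_r$ to their centres as $r\to0$.
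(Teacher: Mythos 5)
Your proposal is correct and follows essentially the same route as the paper: dominate the triple integral $\int_0^\infty\int_G\int_G \betrag{\psi_t(x\cdot\ortn y^{-1})f(y)\overline{g(x)}}\dht\,\mathrm dx\,\mathrm dy$ by the kernel $K_{\betrag{\psi}}$ (using disjointness of the supports), invoke Tonelli--Fubini to swap the $\dht$-integral out, undo the rotation by the volume-preserving substitution $\ortp y\mapsto y$ to recognize $\spd{A_tf}{g}$, identify the $\dht$-integral of $\spd{A_tf}{g}$ with $\spd{Tf}{g}$ via the weak-operator net of Theorem~\ref{opdef}, and finish with a duality/exhaustion argument for the a.e.\ equality and an analogous computation for $T^\ast$. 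The only differences are cosmetic: you phrase the integrability via the explicit bound $K_{\betrag\psi}(x,y)\le C\,d(x,y)^{-Q}$ from Lemma~\ref{lemma:kernelpointwise} together with $d(L,\supp f)>0$, whereas the paper simply observes that $K_{\betrag\psi}$ is continuous and therefore bounded on the compact set $\supp g\times\supp f$ away from the diagonal; and you exhaust $G\setminus\supp f$ by compact sets where the paper uses countably many rational balls.
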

\begin{proof} The function $K_{\betrag{\psi}}$ is continuous by Lemma \ref{lemma:kernelpointwise}.  
If $f,g\in\LP^2(G)$ have compact support and $\supp f\cap\supp g =\emptyset$, then we have the estimate
\begin{equation*}\begin{split}
\infty>&\int_G\int_G K_{\betrag{\psi}}(x,y)\betrag{f(y)}\betrag{g(x)} \,\mathrm dy\,\mathrm dx \\
&= \int_G\int_G \int_0^\infty \betrag{\psi_t(x\cdot\ortn y^\me)\cdot f(y)\cdot\overline{g(x)}}\dht\,\mathrm dx\,\mathrm dy.
\end{split}\end{equation*}
Tonelli's and Fubini's Theorems imply that 
\begin{equation*}\begin{split}
 &\int_G \int_G K_\psi(x,y) f(y) \,dy\  \ov{g(x)}\,dx \\ 
&= \int_G\int_G \int_0^\infty \psi_t(x\cdot\ortn y^\me) \cdot f(y)\cdot\ov{g(x)} \dht\,\mathrm dx\,\mathrm dy \\
&= \int_0^\infty\int_G\int_G  \psi_t(x\cdot\ortn y^\me) \cdot f(y)\,\mathrm dy \ \ov{g(x)} \,\mathrm dx \dht \\
&= \int_0^\infty\int_G\int_G  \psi_t(x\cdot y^\me) \cdot f(\ortp y)\,\mathrm dy\ \ov{g(x)} \,\mathrm dx \dht \\
&= \int_0^\infty \spd{\psi_t\ast(f\krng\ortp)}{g} \dht \\
&= \int_0^\infty \spd{A_t f}{g} \dht \\
&= \spd{Tf}{g},
\end{split}\end{equation*}
where $\ortp y$ has been substituted for $y$. 
A similar calculation yields
\begin{equation*}
 \int_G \int_G \overline{K_\psi(y,x)} f(y) \,\mathrm dy\  \ov{g(x)}\,\mathrm dx = \spd{T^\ast f}{g}.
\end{equation*}
Finally, let $\overline B\subseteq G$ be a compact ball with rational radius and rational center such that $\overline B\subseteq G\setminus \supp f$.
Then $ \int_G K_\psi(x,y) f(y) \,dy$ converges for all $x\in\overline B$,
\begin{equation*}
\spd{Tf}{g}= \spd{ \int_G K_\psi(\,\cdot\,,y) f(y) \,\mathrm dy }{g}
\end{equation*}
holds for all $g\in\LP^2(B)$ and hence $Tf(x)=\int_G K_\psi(x,y) f(y) \,\mathrm dy$ for almost all $x\in\overline B$. Since there are only 
countably many balls $\overline B$ and every $x\in G\setminus \supp f$ is contained in such a ball, we have equality for almost all $x\in G\setminus\supp f$.

Similar arguments apply to $T^\ast$.
\end{proof}

We will need the following technical lemma.
\begin{lemma}\label{tecnical}
There is a constant $C_1>0$ such that, for all $x\in G$, $t>0$ and $s\in[-\frac{t}{2},\frac{t}{2}]$,
\[ \betrag{ 
(\dil_{t-s}^\me x)^\me(\dil_{t}^\me x)}\le C_1 \betrag{\dil_{t}^\me x}\cdot\left(\frac{\betrag{s}}{t}\right)^\frac{1}{\Gamma}. \]
\end{lemma}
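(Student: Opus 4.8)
The plan is to reduce the estimate to the behaviour of the dilation group near the identity, exploiting that $\dil_{t-s}^{-1} = \dil_{(t-s)/t}^{-1}\circ\dil_t^{-1}$ and that $s/t$ is small. First I would set $w\df \dil_t^{-1}x$, so that $\dil_{t-s}^{-1}x = \dil_r^{-1}w$ with $r\df (t-s)/t\in[1/2,3/2]$; the claim then becomes
\[ \betrag{(\dil_r^{-1}w)^{-1}w}\le C_1\betrag{w}\cdot\betrag{r-1}^{1/\Gamma},\]
since $\betrag s/t = \betrag{r-1}$. Now I would exploit homogeneity in $w$: writing $w = \dil_{\betrag w}v$ with $\betrag v = 1$ (for $w\neq 0$; the case $w=0$ is trivial), and using that $\dil_r$ commutes with $\dil_{\betrag w}$ together with $\betrag{\dil_{\betrag w}z}=\betrag w\betrag z$, the left-hand side equals $\betrag w\cdot\betrag{(\dil_r^{-1}v)^{-1}v}$. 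So it suffices to prove the bound on the unit sphere $S_1$, i.e. to find $C_1$ with
\[ \betrag{(\dil_r^{-1}v)^{-1}v}\le C_1\betrag{r-1}^{1/\Gamma}\kqq v\in S_1,\ r\in[\tfrac12,\tfrac32].\]

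The key step is then a compactness-plus-smoothness argument on $S_1\times[1/2,3/2]$. Consider the smooth map
\[ F:S_1\times[\tfrac12,\tfrac32]\to G\kqq F(v,r)\df (\dil_r^{-1}v)^{-1}v = (\exp(-A\log r)v)^{-1}v,\]
which is well-defined and smooth because $\dil_r = \exp(A\log r)$ is linear and the group product is polynomial. Since $F(v,1)=v^{-1}v=0$ for every $v$, the mean value inequality along the segment from $r$ to $1$ gives $\norm{F(v,r)}_2\le C_0\betrag{r-1}$ uniformly on the compact set $S_1\times[1/2,3/2]$, where $C_0$ bounds $\partial_r F$ there. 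Moreover $F(v,r)$ ranges over a relatively compact subset $U$ of $G$, so the norm estimate \eqref{normab} applies on $U$: $\betrag{F(v,r)}^{\Gamma}\le C_1'\norm{F(v,r)}_2$, whence $\betrag{F(v,r)}\le (C_1'C_0)^{1/\Gamma}\betrag{r-1}^{1/\Gamma}$. Unwinding the reductions yields the lemma with $C_1 = (C_1'C_0)^{1/\Gamma}$.

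The only delicate point is making sure the exponent $1/\Gamma$ is exactly right and not something worse: it comes precisely from the left inequality in \eqref{normab}, $\betrag x^\Gamma\le C\norm x_2$ near the origin, applied after the linear (hence Lipschitz) estimate on $F$. One should also check the edge cases — $w=0$, where both sides vanish, and $s=0$, where $F(v,1)=0$ — but these are immediate. I expect the main obstacle to be purely bookkeeping: keeping track of the commutation $\dil_r\circ\dil_{\betrag w} = \dil_{\betrag w}\circ\dil_r$ and the scaling $\betrag{\dil_{\betrag w}z}=\betrag w\betrag z$ carefully enough that the homogeneity reduction to $S_1$ is airtight, since $\betrag\cdot$ is only a quasi-norm and the group product is non-abelian.
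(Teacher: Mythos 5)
Your proposal is correct and follows essentially the same strategy as the paper: reduce to the unit sphere by homogeneity, apply a compactness-plus-smoothness argument to the map $(v,r)\mapsto(\dil_r^{-1}v)^{-1}v$ (the paper uses the equivalent $(x,p)\mapsto x^{-1}\dil_{1+p}x$, which is its group inverse), and convert the Euclidean Lipschitz bound into a homogeneous-norm bound via \eqref{normab}, which is where the exponent $1/\Gamma$ comes from. The only cosmetic difference is that the paper proves the unit-sphere estimate first and then rescales at the end, whereas you rescale first; the content is identical.
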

\begin{proof}
The map
\[ h:\menge{x\in G}{\betrag{x}=1}\times[-\frac{1}{3},2]\to G,\quad (x,p)\mapsto x\inv D_{1+p}x \]
is the restriction of a smooth map to a compact set. Since furthermore $h(x,0)=0$,
we find a constant $C$ such that for  all  $p\in[-\frac{1}{3},2]$ we have
\[ \betrag{x}=1 \folgt \norm{x\inv D_{1+p}x}_2\le C\betrag{p}. \]
The norm estimate \eqref{normab} yields
\[ \betrag{x}=1 \folgt \betrag{x\inv D_{1+p}x}\le C\betrag{p}^\frac{1}{\Gamma} . \]
Setting $p\dd\frac{t}{t-s}-1=\frac{s}{t-s}$, we obtain the estimate
\[ \betrag{x}=1\  \folgt\  \betrag{x\inv D_\frac{t}{t-s}x}\le C\betrag{\frac{s}{t-s}}^\frac{1}{\Gamma}
\le C_1 \betrag{\frac{s}{t}}^\frac{1}{\Gamma},\]
and for arbitrary $x$ with $\betrag{x}=r$, we have
\[  C_1 \betrag{\frac{s}{t}}^\frac{1}{\Gamma}
\ge  \betrag{(\dil_r^\me x^\me) D_\frac{t}{t-s}\dil_r^\me x}
= r^\me t \betrag{( \dil_t^\me x^\me)( D_{t-s}^\me x)}.
\]
Finally, 
multiplying with $r\cdot t^\me$ yields
\[  C_1 \betrag{\frac{s}{t}}^\frac{1}{\Gamma}t^\me\betrag{x}
\ge \betrag{( \dil_t^\me x^\me)( D_{t-s}^\me x)}=\betrag{ 
(\dil_{t-s}^\me x)^\me(\dil_{t}^\me x)}.
\]

\end{proof}

\begin{theorem} \label{intab}
There are constants $k$ and 
  $C$ and
a Schwartz norm $\norm{\cdot}_{(N)}$  such that for all $\psi\in\schwartz(G)$,
$\delta>0$, $y\in G$, $\bar y\in\buu_\delta(y)$ it holds that
\begin{equation}\label{zugl}
\int_{G\setminus\buu_{\kon\delta}(y)} \betrag{K_\psi(x,y)-K_\psi(x,\bar y)} \,
\mathrm dx \le \ C\norm{\psi}_{(N)}.
\end{equation}
 If in addition 
$\psi(O_t x)=\psi(x)$ for all $t$ and $x$, then
\begin{equation}\label{zugl*}
\int_{G\setminus\buu_{\kon\delta}(y)}
\betrag{\ov{K_\psi(y,x)}-\ov{K_\psi(\bar y,x)}} \, \mathrm dx  \le \ C\norm{\psi}_{(N)}.
\end{equation} 
\end{theorem}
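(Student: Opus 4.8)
The plan is to prove \eqref{zugl} by factoring the perturbation of the base point into a rotation followed by a left translation, and then to deduce \eqref{zugl*} by a duality argument. Recalling that $\buu_\delta(y)=\bigcup_{\betrag s\le\delta}B_\delta\mal(\ort sy)$, the hypothesis $\bar y\in\buu_\delta(y)$ provides $s_0\in[-\delta,\delta]$ and $w\in G$ with $\betrag w<\delta$ such that $\bar y=w\mal\ort{s_0}y$; writing $y'\dd\ort{s_0}y$ one has $K_\psi(x,y)-K_\psi(x,\bar y)=\bigl[K_\psi(x,y)-K_\psi(x,y')\bigr]+\bigl[K_\psi(x,y')-K_\psi(x,\bar y)\bigr]$, and I would estimate the $\LP^1\bigl(G\setminus\buu_{k\delta}(y),\mathrm dx\bigr)$-norm of the two brackets --- the \emph{rotation part} and the \emph{translation part} --- separately, fixing the constant $k$ (large compared with the quasi-metric constant of $d$) only at the end. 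Since $K_\psi$ and the balls $\buu$ rescale consistently under $\{\dil_t\}$, every estimate below will come out independent of $\delta$.

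\emph{The translation part.} Put $v_t\dd x(\ortn y')^\me$ and $\omega_t\dd\ortn(w^\me)$, so that $\betrag{\omega_t}=\betrag w<\delta$ by \eqref{rotinvariance} and $x(\ortn\bar y)^\me=v_t\,\omega_t$; then the second bracket is $\int_0^\infty[\psi_t(v_t)-\psi_t(v_t\omega_t)]\dht$. Here $\psi_t(v_t)-\psi_t(v_t\omega_t)=t^{-Q}\bigl[\psi(\dil_{t^{-1}}v_t)-\psi(\dil_{t^{-1}}v_t\cdot\dil_{t^{-1}}\omega_t)\bigr]$ with $\betrag{\dil_{t^{-1}}\omega_t}=t^{-1}\betrag w<t^{-1}\delta$, so Lemma \ref{mws} (with $l$ large and the corresponding $N$) bounds the integrand by $C\norm\psi_{(N)}\,t^{-Q}(\delta/t)^\gamma$ times the two Schwartz decay factors $(1+\norm{\dil_{t^{-1}}v_t}_2)^{-l}$ and $(1+\norm{\dil_{t^{-1}}(v_t\omega_t)}_2)^{-l}$. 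I would split the $t$-integral at $t\simeq\delta$: for $t\lesssim\delta$ the hypotheses $x\notin\buu_{k\delta}(y)$ and $y'=\ort{s_0}y$ give $\betrag{v_t}=\betrag{x(\ort{s_0-t}y)^\me}\ge k\delta$ and hence $\betrag{v_t\omega_t}\gtrsim\delta$, so the measure-preserving substitutions $z=v_t$ and $z=v_t\omega_t$, the norm estimate \eqref{normab2}, and the polar decomposition \eqref{spherical} turn the $x$-integral of each decay factor into a negative power of $t$, and after integrating in $t$ the powers of $\delta$ cancel; for $t\gtrsim\delta$ one uses that $(\delta/t)^\gamma\dht$ is integrable on $[\delta,\infty)$ while $\int_G t^{-Q}(1+\norm{\dil_{t^{-1}}v_t}_2)^{-l}\,\mathrm dx\le C$, so this range contributes $\le C\norm\psi_{(N)}\,\delta^\gamma\int_\delta^\infty t^{-\gamma}\dht\le C\norm\psi_{(N)}$.

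\emph{The rotation part is the main obstacle}, because $\betrag{y\bar y^\me}$ may be arbitrarily large even though $\bar y\in\buu_\delta(y)$ (a small rotation moves far-away points by a large amount), so a mean-value estimate as above is useless; the remedy is to trade the rotation of the base point for a translation of the scale variable. Substituting $\tau=t-s_0$ in $K_\psi(x,y')=\int_0^\infty\psi_\tau\bigl(x(\ort{s_0-\tau}y)^\me\bigr)\frac{\mathrm d\tau}{\tau}$ and comparing with $K_\psi(x,y)=\int_0^\infty\psi_t(w_t)\dht$, where $w_t\dd x(\ortn y)^\me$, yields
\[ K_\psi(x,y)-K_\psi(x,y')=\int_0^\infty\Bigl[\frac{\psi_t(w_t)}{t}-\frac{\psi_{t+s_0}(w_t)}{t+s_0}\Bigr]\,\mathrm dt\ +\ R(x), \]
where $R(x)$ is the (signed) contribution of the scales in $(0,\betrag{s_0}]$; since $\betrag{s_0}\le\delta$ and $x\notin\buu_{k\delta}(y)$ forces $\betrag{x(\ort sy)^\me}\ge k\delta$ whenever $\betrag s\le k\delta$, the term $R$ is dominated just as in the proof of Lemma \ref{epskern} (for $s_0<0$ it is literally $\,{}^{\betrag{s_0}}\!K_\psi(\,\cdot\,,y)$), so its integral over $G\setminus\buu_{k\delta}(y)$ is $\le C\norm\psi_{(N)}$. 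For the remaining integral I would use the fundamental theorem of calculus in the scale: since $\tfrac{\mathrm d}{\mathrm dr}\dil_{r^{-1}}=-\tfrac1r A\dil_{r^{-1}}$, one finds $\tfrac{\psi_t(w)}{t}-\tfrac{\psi_{t+s_0}(w)}{t+s_0}=\int_t^{t+s_0}r^{-2}\,\Psi_r(w)\,\mathrm dr$ with $\Psi(u)\dd(Q+1)\psi(u)+\tst{\nabla\psi(u)}{Au}$, which is again a Schwartz function satisfying $\norm\Psi_{(N')}\lesssim\norm\psi_{(N)}$. Then I would split the $t$-integral at $t\simeq k\delta$: for $t\le k\delta$ the argument $w_t$ has homogeneous norm $\ge k\delta$, so \eqref{normab2} and \eqref{spherical} bound $\int_{G\setminus\buu_{k\delta}(y)}\betrag{\Psi_r(w_t)}\,\mathrm dx$ by a constant times $r^{-Q+m}(k\delta)^{Q-m}$ for $m$ large, and feeding this into $\int_0^{k\delta}\int_t^{t+s_0}r^{-2}(\cdots)\,\mathrm dr\,\mathrm dt$ the powers of $\delta$ cancel and one is left with $\le C\norm\Psi_{(M)}$; for $t>k\delta$ one uses $\int_G\betrag{\Psi_r(w_t)}\,\mathrm dx=\norm\Psi_1$ and $\int_{k\delta}^\infty\int_t^{t+s_0}r^{-2}\,\mathrm dr\,\mathrm dt\le\betrag{s_0}/(k\delta)\le1/k$. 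This proves \eqref{zugl}.

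\emph{The estimate \eqref{zugl*}.} Assume now $\psi(\ort tx)=\psi(x)$ for all $t,x$, and set $\tilde\psi(x)\dd\overline{\psi(x^\me)}$ and $\tilde O_t\dd\ortn$. Then $\tilde\psi\in\schwartz$ is again rotation invariant, $\int\tilde\psi=\overline{\int\psi}=0$, and $\norm{\tilde\psi}_{(N)}=\norm\psi_{(N)}$; $\{\tilde O_t\}$ is again a continuous homomorphism $\R\to\aut G$ with relatively compact image commuting with $\{\dil_t\}$; and the quasi-balls $\buu$ are unchanged when $O$ is replaced by $\tilde O$, the defining union running over $\betrag s\le r$. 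Using $x^\me=-x$, the inversion invariance of $\leb$, and the rotation invariance of $\psi$ one checks that $\overline{\psi_t\bigl(y(\ortn x)^\me\bigr)}=\tilde\psi_t\bigl(x(\tilde O_{-t}y)^\me\bigr)$, whence $\overline{K_\psi(y,x)}$ is precisely the kernel of the operator obtained from \eqref{deftf} on replacing $(\psi,\{O_t\})$ by $(\tilde\psi,\{\tilde O_t\})$. Consequently \eqref{zugl*} is \eqref{zugl} for that operator, which has just been proved.
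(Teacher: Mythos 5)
Your argument is correct and shares the central idea with the paper's proof --- namely, trading the rotation of the base point for a shift of the scale variable so that, at each fixed $t$, the function $\psi$ is evaluated at two points of small homogeneous distance. Where the two proofs diverge is in how the resulting perturbation of the scale is controlled. The paper compares $K^{2\delta-s}_\psi(x,y)$ and $K^{2\delta}_\psi(x,\bar y)$ by a three-term telescoping $a-b+b-c+c-d$, in which $b-c$ handles the change of normalization $(t-s)^{-Q-1}\leftrightarrow t^{-Q-1}$ and $c-d$ handles the change of dilation $\dil_{t-s}^{-1}\leftrightarrow\dil_t^{-1}$ via the technical Lemma \ref{tecnical}. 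You instead factor $\bar y=w\cdot\ort{s_0}y$, split the difference into a translation part and a rotation part, and attack the latter by the fundamental theorem of calculus in the scale variable: the identity $\tfrac{\mathrm d}{\mathrm dr}\bigl[r^{-1}\psi_r(w)\bigr]=-r^{-2}\Psi_r(w)$ with $\Psi(u)=(Q+1)\psi(u)+\tst{\nabla\psi(u)}{Au}$ combines what the paper treats as $b-c$ and $c-d$ into a single term. That is a genuine simplification: $\Psi$ is again a Schwartz function with $\int\Psi=0$ (because $\int\tst{\nabla\psi}{Au}\,\mathrm du=-\trace A\cdot\int\psi=0$), so the rotation part is bounded by exactly the kind of decay/cancellation estimate one already has, and Lemma \ref{tecnical} becomes superfluous. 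Your translation part plays the role of the paper's $a-b$ term (both rest on Lemma \ref{mws}), and your remainder $R$, consisting of scales $\le\betrag{s_0}$, is the exact analogue of the paper's \eqref{singterm} handled through Lemma \ref{epskern}. Finally, your deduction of \eqref{zugl*} from \eqref{zugl} by passing to $\tilde\psi(x)=\overline{\psi(x^\me)}$ and $\tilde O_t=\ortn$ is a clean, explicit version of what the paper only indicates as "minor changes"; the key checks (rotation invariance of $\tilde\psi$, invariance of the balls $\buu$ under $t\mapsto-t$, and the identity $\overline{\psi_t(y(\ortn x)^\me)}=\tilde\psi_t(x(\tilde O_{-t}y)^\me)$, which requires the rotation invariance of $\psi$) are all present. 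Two small points worth spelling out in a final write-up: the FTC step needs a brief case distinction on $\sgn s_0$ (so that the integration variable $r$ stays positive and $R$ is identified either as $-\,{}^{s_0}\!K_\psi(\,\cdot\,,y')$ or as ${}^{\betrag{s_0}}\!K_\psi(\,\cdot\,,y)$), and in the small-$t$ regime of the translation part you should split at a multiple of $\delta$ strictly below $(k-1)\delta$ so that $\betrag{s_0-t}\le k\delta$ is guaranteed; neither affects the substance.
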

\begin{proof}
Given the assumption about $\psi$, we have
\[ 
 \overline{K(y,x)} = \int_0^\infty 
\psi^\ast_t (x\cdot\ortp y^\me ) \dht.
\] 
Therefore,  \eqref{zugl*} can be proven like \eqref{zugl} with minor changes. We prove \eqref{zugl}:
Because of \eqref{distequiv}, there is some $k\ge 3$ such that $d(x,y)\ge k\delta\wedge d(y,\bary)<\delta \folgt d(x,\bary)\ge 2\delta$ for every $\delta>0$. 
Let  $\delta>0$ and $y,\bary\in G$ such that $d(y,\bary)<\delta$. Let $s$ be a number such that $\betrag{\ortps y\bary^\me}<\delta$ and $\betrag{s}\le\delta$.
Lemma \ref{epskern} yields the estimate 
\begin{equation}\label{singterm}
\int_{d(x,y)>k\delta}\betrag{\,^{2\delta-s}\!K_\psi(x,y)-\,^{2\delta}\!K_\psi(x,\bary)}\le C \norm{\psi}_{(N_0)},
 \end{equation}
for some Schwartz norm $\norm{\,\cdot\,}_{(N_0)}$. Hence, it is sufficient to show that 
\begin{equation}\label{toshow}
\int_G\betrag{K^{2\delta-s}_\psi(x,y)-K^{2\delta}_\psi(x,\bary)}\,\mathrm dx\le C \norm{\psi}_{(N')}.
 \end{equation}
Observe that by substituting $t-s$ for $t$, we have
\begin{equation}
K^{2\delta-s}_\psi(x,y)=\int_{2\delta-s}^\infty \psi_t(x\cdot\ortn y^{-1})\dht=\int_{2\delta}^\infty \frac{\psi_{t-s}(x\cdot\ort{-t+s}y^{-1})}{t-s} \,\mathrm dt.
 \end{equation}
This transformation yields additional cancellation because afterward, in \eqref{toshow}, at the same value of $t$, the function $\psi$ is evaluated in two points of small homogeneous distance.
To improve the estimates, we decompose the kernel integral in \eqref{toshow} further.
Setting 
\[ a(t,x)= \frac{\psi_{t-s}(x\cdot\ort{-t+s}y^{-1})}{t-s}
\ \text{ and } \  d(t,x)=\frac{\psi_{t}(x\cdot\ort{-t}\bary^{-1})}{t},
\]
\[ b(t,x)= \frac{\psi_{t-s}(x\cdot\ort{-t}\bary^{-1})}{t-s}
\ \text{ and } \  c(t,x)=\frac{\psi\krng\dil_{t-s}^\me (x\cdot\ort{-t}\bary^{-1})}{t^{Q+1}}, 
\]
 we have 
\begin{equation}\begin{split} \label{split}
&\betrag{K^{2\delta-s}_\psi(x,y)-K^{2\delta}_\psi(x,\bary)}\\
&\le \int_{2\delta}^\infty \betrag{ a(t,x)-d(t,x)}\,\mathrm dt \\
& = \int_{2\delta}^\infty \betrag{ a(t,x)-b(t,x)+b(t,x)-c(t,x)+c(t,x)-d(t,x)}\,\mathrm dt \\
&\le \int_{2\delta}^\infty \betrag{ a(t,x)-b(t,x)}\,\mathrm dt 
+\int_{2\delta}^\infty \betrag{ b(t,x)-c(t,x)}\,\mathrm dt
+\int_{2\delta}^\infty \betrag{ c(t,x)-d(t,x)}\,\mathrm dt.
\end{split}\end{equation} 
Note that in these integrals $t-s\simeq t$.
The proof will be completed by showing that the estimate \eqref{toshow} holds with the integrand replaced by each of the three $t$-integrals 
at the end of \eqref{split}. 
The first of these integrals is
\begin{equation}\begin{split}
&\int_G \int_{2\delta}^\infty \betrag{ a(t,x)-b(t,x)}\,\mathrm dt\,\mathrm dx\\
&\le C\int_{2\delta}^\infty \int_G t^{-Q} \betrag{\psi\krng\dil_{t-s}^\me(x\cdot\ort{-t+s}y^{-1}) -\psi\krng\dil_{t-s}^\me(x\cdot\ort{-t}\bary^{-1})} \,\mathrm dx\dht,
\end{split}\end{equation} 
 which, by Lemma \ref{mws}, is bounded by
\begin{equation}\label{dfd}\begin{split}
& C\int_{2\delta}^\infty \int_G  t^{-Q} \ C\norm{\psi}_{(N_1)} \betrag{\dil_{t-s}^\me [(\ortps y)\bary^\me]}^\gamma  H(x,t) \ \mathrm dx\dht,
\end{split}\end{equation} 
where
\begin{equation*}
H(x,t
)=\left(\frac{1}{(1+\norm{\dil_{t-s}^\me[x\cdot\ort{-t+s}y^{-1}]}_2)^l}+ \frac{1}{(1+\norm{\dil_{t-s}^\me[ x\cdot\ort{-t}\bary^{-1}]}_2)^l}\right)
\end{equation*} 
 and $l>n$. Note that there is a $C$ such that for every $t$
\begin{equation*}
\int_G H(x,t) \,\mathrm dx\le C t^Q .
\end{equation*} 
Hence \eqref{dfd} continues as
\begin{equation}\tag{\ref{toshow}ab}\begin{split}
&\int_G \int_{2\delta}^\infty \betrag{ a(t)-b(t)}\,\mathrm dt\,\mathrm dx\\
&\le C\norm{\psi}_{(N_1)}\int_{2\delta}^\infty  t^{-Q}\, t^Q\,  \betrag{\dil_{t-s}^\me [(\ortps y)\bary^\me]}^\gamma \dht  \\
& \le C\norm{\psi}_{(N_1)}\int_{2\delta}^\infty  \ t^{-\gamma} \delta^\gamma \dht  \\
& \le C\norm{\psi}_{(N_1)}.
\end{split}\end{equation} 
The second integral can be written as
\begin{equation*}\begin{split}
&\int_G \int_{2\delta}^\infty \betrag{ b(t,x)-c(t,x)}\,\mathrm dt\,\mathrm dx\\
&=\int_G \int_{2\delta}^\infty \betrag{  \frac{\psi_{t-s}(x\cdot\ort{-t}\bary^{-1})}{t-s}
-\frac{\psi\krng\dil_{t-s}^\me (x\cdot\ort{-t}\bary^{-1})}{t^{Q+1}} } \\
&= \int_{2\delta}^\infty \int_G\betrag{(t-s)^{-Q-1}-t^{-Q-1}} \cdot\betrag{\psi\krng\dil_{t-s}^{-1}(x\cdot\ort{-t}\bary^{-1})} \,\mathrm dx\,\mathrm dt\\
&=\int_{2\delta}^\infty \betrag{(t-s)^{-Q-1}-t^{-Q-1}} \int_G  \cdot\betrag{\psi\krng\dil_{t-s}^{-1}(x)} \,\mathrm dx \,\mathrm dt.
\end{split}\end{equation*} 
This yields the estimate
\begin{equation}\tag{\ref{toshow}bc}\begin{split}
&\int_G \int_{2\delta}^\infty \betrag{ b(t,x)-c(t,x)}\,\mathrm dt\,\mathrm dx\\
&\le C\int_{2\delta}^\infty t^{Q-2} \betrag{s} \int_G  \cdot\betrag{\psi\krng\dil_{t-s}^{-1}(x)} \,\mathrm dx \,\mathrm dt\\
&\le C\int_{2\delta}^\infty t^{Q-1} \delta \int_G  \cdot\betrag{\psi\krng\dil_{t-s}^{-1}(x)} \,\mathrm dx \dht\\
&\le C\int_{2\delta}^\infty t^{Q-1} \delta \  \cdot (t-s)^Q\norm{\psi}_1 \dht\\
&\le C \norm{\psi}_1 \delta\int_{2\delta}^\infty t^{-1}   \dht\\
&\le C \norm{\psi}_1\\
&\le C \norm{\psi}_{(N_2)}.
\end{split}\end{equation} 
This was the second part. The third part is
\begin{equation}\begin{split}\label{ugk1}
&\int_G \int_{2\delta}^\infty \betrag{ c(t,x)-d(t,x)}\,\mathrm dt\,\mathrm dx\\
&=\int_G \int_{2\delta}^\infty \betrag{ 
\frac{\psi\krng\dil_{t-s}^\me (x\cdot\ort{-t}\bary^{-1})}{t^{Q+1}} - \frac{\psi_{t}(x\cdot\ort{-t}\bary^{-1})}{t} }\,\mathrm dt\,\mathrm dx\\
&= \int_{2\delta}^\infty \int_G t^{-Q}\betrag{ 
\psi\krng\dil_{t-s}^\me (x\cdot\ort{-t}\bary^{-1}) - \psi\krng\dil_{t}^\me(x\cdot\ort{-t}\bary^{-1})} \mathrm dx\dht \\
&= \int_{2\delta}^\infty \int_G t^{-Q}\betrag{ 
\psi(\dil_{t-s}^\me x) - \psi(\dil_{t}^\me x)} \mathrm dx\dht \\
&\le C\norm{\psi}_{(N)} \int_{2\delta}^\infty \int_G t^{-Q}\betrag{ 
(\dil_{t-s}^\me x)^\me(\dil_{t}^\me x)}^\gamma  H(x,t)\mathrm dx\dht. \\
\end{split}\end{equation} 
The last step relies on Lemma \ref{mws}, where  $l$ has been chosen such that $\int_G\betrag{\dil_{t}^\me x}^\gamma \cdot H(x,t)\,\mathrm dx<\infty$,  and   $H$ is the function
\begin{equation}\label{Hab}
H(x,t)= \frac{1}{(1+\norm{\dil_{t-s}^\me x}_2)^l}+ \frac{1}{(1+\norm{\dil_{t}^\me x)}_2)^l}.
\end{equation} 
Since $\betrag{s}\le \delta$ and $t\ge 2\delta$, we have $\norm{\dil_\frac{t}{t-s}}\le C$ and $\norm{\dil_\frac{t-s}{t}}\le C$, that is, 
$\norm{\dil_{t}^\me x}_2\simeq \norm{\dil_{t-s}^\me x}_2$, and the two summands in \eqref{Hab} are bounded by a constant multiple of the second one. Furthermore, 
\[ \int_G\betrag{\dil_{t}^\me x}^\gamma \cdot H(x,t)\,\mathrm dx \le Ct^Q. \]
Using  Lemma \ref{tecnical},
we continue estimate \eqref{ugk1} with 
\begin{equation} \tag{\ref{toshow}{cd}} 
\begin{split}
&\int_G \int_{2\delta}^\infty \betrag{ c(t,x)-d(t,x)}\,\mathrm dt\,\mathrm dx\\
&\le C\norm{\psi}_{(N_3)}  \int_{2\delta}^\infty \int_G t^{-Q}  \left( C_1 \betrag{\dil_{t}^\me x}\cdot\left(\frac{\betrag{s}}{t}\right)^\frac{1}{\Gamma}\right) ^\gamma  H(x,t)\mathrm dx\dht \\
&\le C\norm{\psi}_{(N_3)}  \int_{2\delta}^\infty  t^{-Q-\frac{\gamma}{\Gamma }} \delta^\frac{\gamma}{\Gamma }\int_G  \betrag{\dil_{t}^\me x}^\gamma  H(x,t)\mathrm dx \dht\\
&\le C\norm{\psi}_{(N_3)}  \int_{2\delta}^\infty  t^{-Q-\frac{\gamma}{\Gamma }} \delta^\frac{\gamma}{\Gamma } t^Q\dht\\
&\le C\norm{\psi}_{(N_3)}  \delta^\frac{\gamma}{\Gamma } \int_{2\delta}^\infty  t^{-\frac{\gamma}{\Gamma }}  \dht\\
&\le C\norm{\psi}_{(N_3)} .
\end{split}\end{equation} 
Adding (\ref{toshow}ab),(\ref{toshow}bc),(\ref{toshow}cd)  finishes the proof of \eqref{toshow} with $N'=\max\{N_1,N_2,N_3\}$.
\end{proof}

\section{Proof of Theorem 1}
\begin{proof}
The weak type result \eqref{weaktype} and the $\mathrm L^p$-result \eqref{lpestimate} for $1<p<2$ follow with the help of  \cite[Theorem 3, p.\,19]{stein1}, which relies on a Calder\' on-Zygmund decomposition of $f$, here with respect to the
 quasi-balls $\buu$. The relevant premises have been verified in Theorems \ref{opdef}, \,\ref{homraum},  and  \ref{intab}.

If $2<p<\infty$,\  let  $q\dd\frac{p}{p-1}$ and  $M\dd\menge{g\in\schwartz(G)}{\norm{g}_q<1}$. Note that $q<2$.
With the same arguments as in the proof of boundedness for $T$, but replacing $\overline{K(y,x)}$ for $K(x,y)$, it can be shown that
$\norm{T^\ast g}_q\le C\norm{g}_q$.\,
It follows that
\[ \norm{Tf}_p = \sup_{g\in M} \spd{Tf}{g}
= \sup_{g\in M} \spd{f}{T^\ast g}
\le \sup_{g\in M} \norm{f}_p\norm{T^\ast g}_q
\le C_p \norm{f}_p \]
for any $f\in\LP^2\cap\LP^p(G)$.
\end{proof}

\section{Example}\label{sec:example}
Let $G_1$ be the real vector space $\C\times\R^{n-2}$. We denote its elements by 
\begin{equation*}\label{G1start}
x =(u, x') \kq u\in\C \kq x'=(x_3,\ldots, x_n) \in\R^{n-2}.
\end{equation*}
Let $a,a_3,\ldots a_n>0$. We introduce a family of dilations with $ Q= 2a+\sum_{j=3}^n a_j$ by 
\begin{equation*}
D_rx\dd (r^a u,r^{a_3}x_3,\ldots, r^{a_n} x_n)
\end{equation*}
and a family of rotations by $\ort{t}(u,x')\dd (e^{it}u, x')$.
There are several ways to define homogeneous norms on $G_1$, each of which has its own advantages. 
It is known that on any homogeneous group any two homogeneous norms $\betrag{\,\cdot\,}$ and $\betrag{\,\cdot\,}'$ are
equivalent in the sense that $\betrag{\,\cdot\,} \simeq \betrag{\,\cdot\,}'$. The volumes of the corresponding balls and quasi-balls are also equivalent in the sense that $\mu(\buu_r\cdot z)\simeq \mu(B_r'\cdot z)$ and $\mu(\buu_r(z))\simeq \mu(\buu_r'(z))$ if the balls $B, \buu$ and $B',\buu'$ are defined in terms of the homogeneous norms $\betrag{\,\cdot\,}$ and  $\betrag{\,\cdot\,}'$ respectively.  Let us consider the homogeneous norms
\begin{equation}\label{hnorm1}
\betrag{x}=\max\{ \betrag{u}_{\C}^\frac{1}{a}, \betrag{x_3}_\R^\frac{1}{a_3},\ldots,\betrag{x_n}_\R^\frac{1}{a_n}  \},
\end{equation}
\begin{equation*}
\betrag{x}'= \inf\menge{r>0}{\norm{\dil_{r^{-1}}(x) }_2<1},
\end{equation*}
\begin{equation*}
\betrag{x}''=\left( \betrag{u}_{\C}^\frac{2}{a}+\sum_{j=3}^n \betrag{x_j}^\frac{2}{a_j} \right)^\frac{1}{2}.
\end{equation*}
The second one has the advantage of being smooth away from the origin, while the first one allows effortless calculations.
Throughout this section, we will use $\betrag{\,\cdot\,}$, and theorems will be valid for any homogeneous norm. 

If $y=(v,y')\in G_1$, and if we use the homogeneous norm given by \eqref{hnorm1},  the quasi-metric \eqref{quasi-metric} has the form 
\begin{equation*}\begin{split} 
d(x,y)&= \inf\menge{r>0}{ \exists s:  \betrag{s}\le r\wedge\betrag{x-\ort{s}y}<r}, \\
&= \min_{s\in\R}\max \left\{ \betrag{s},\,  \betrag{u-e^{is}v}_{\C}^\frac{1}{a},\, \betrag{x_3-y_3}^\frac{1}{a_3},\, \betrag{x_n-y_n}^\frac{1}{a_n} \right\}  .
\end{split}\end{equation*}
 With $y$ and $r$ fixed, we have
\begin{equation}\label{fgdar}\begin{split} 
&x\in\buu_r(y) \\
&\gdw d(x,y)<r\\
&\gdw  \max\left\{ \min_{\betrag{s}\le r} \betrag{u-e^{is}v}_{\C}^\frac{1}{a},\,\betrag{x_3-y_3}_\R^\frac{1}{a_3},\, \betrag{x_n-y_n}_\R^\frac{1}{a_n} \right\}<r.
\end{split}
\end{equation}
\begin{lemma} \label{volbal}
For all $y=(v,y')$ in $G_1$ and all $r>0$, the volume $\leb(\buu_r(y))$ is bounded from above and from below by a constant multiple of  
\begin{equation*}
 r^Q+  r^{Q-a}\betrag{v}\min\{1,r\}.
\end{equation*}
\end{lemma}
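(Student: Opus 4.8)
The plan is to reduce the statement to an estimate for the area of the planar $r^a$-neighbourhood of a circular arc, and then carry out that estimate by elementary geometry. By \eqref{fgdar}, a point $x=(u,x')$ lies in $\buu_r(y)$ if and only if $\betrag{x_j-y_j}<r^{a_j}$ for $j=3,\dots,n$ and $\min_{\betrag s\le r}\betrag{u-e^{is}v}_\C<r^a$. These conditions are mutually independent, so Fubini's theorem gives
\[ \leb(\buu_r(y))=\Bigl(\prod_{j=3}^n 2r^{a_j}\Bigr)\lambda_2(N_r)=2^{n-2}\,r^{Q-2a}\,\lambda_2(N_r), \]
where $\lambda_2$ denotes Lebesgue measure on $\C=\R^2$ and $N_r\dd\menge{u\in\C}{\exists s,\ \betrag s\le r,\ \betrag{u-e^{is}v}_\C<r^a}$ is the open Euclidean $r^a$-neighbourhood of the arc $\mathcal A_r\dd\menge{e^{is}v}{\betrag s\le r}$. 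Setting $\rho\dd\betrag v_\C$, it therefore suffices to prove $\lambda_2(N_r)\simeq r^{2a}+\rho\,r^a\min\{1,r\}$.

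For the lower bound, $N_r$ contains the Euclidean disc of radius $r^a$ centred at $v\in\mathcal A_r$, hence $\lambda_2(N_r)\ge\pi r^{2a}$. Moreover, writing $u=t\,e^{i(\arg v+\sigma)}$ with $t=\betrag u_\C\ge0$ and $\betrag\sigma\le\pi$, one has $\betrag{u-e^{i\sigma}v}_\C=\betrag{t-\rho}$, so $N_r$ contains the polar region $\menge{(t,\sigma)}{\betrag{t-\rho}<r^a,\ \betrag\sigma<\min\{r,\pi\}}$. If $\rho\ge r^a$ this region has area $4\rho r^a\min\{r,\pi\}\simeq\rho r^a\min\{1,r\}$; if $\rho<r^a$ the term $\rho r^a\min\{1,r\}$ is dominated by $r^{2a}$. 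In either case $\lambda_2(N_r)\gtrsim r^{2a}+\rho r^a\min\{1,r\}$.

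For the upper bound I would argue by cases. From $\betrag{u-e^{is}v}_\C<r^a$ one obtains $\betrag{\betrag{u}_\C-\rho}<r^a$, so $N_r$ is contained in the annulus $\menge{u}{\betrag{\betrag{u}_\C-\rho}<r^a}$ (a disc when $\rho\le r^a$), whose area is $\lesssim\rho r^a+r^{2a}$. If $r\ge1$ this is already $\lesssim r^{2a}+\rho r^a\min\{1,r\}$. If $r<1$ and $\rho\le r^a$, then $N_r$ lies in the disc of radius $\rho+r^a\le2r^a$, of area $\lesssim r^{2a}$. If $r<1$ and $\rho>r^a$, then any point within Euclidean distance $r^a$ of $e^{is}v$ (which has modulus $\rho>r^a$) has argument within $\arcsin(r^a/\rho)\le\frac\pi2 r^a/\rho$ of $\arg v+s$; together with $\betrag s\le r$ this forces the angular coordinate $\sigma$ of any $u\in N_r$ to satisfy $\betrag\sigma\le r+\frac\pi2 r^a/\rho<\pi$, so $N_r$ lies in a genuine sector of the annulus, of area $\lesssim\bigl(r+r^a/\rho\bigr)\rho r^a=\rho r^a\,r+r^{2a}\lesssim r^{2a}+\rho r^a\min\{1,r\}$, the last step using $r<1$. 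This gives the matching upper bound.

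The principal obstacle is exactly this area estimate for $N_r$: the dominant term switches between $r^{2a}$ (the neighbourhood width dominates), $\rho r^a$ (the arc length dominates, once $\mathcal A_r$ has wrapped around the full circle, i.e.\ $r\gtrsim1$), and $\rho r^a r$ (a short arc, $r<1$), and the transitional regimes $\rho\approx r^a$ and $r\approx1$ call for the small amount of extra bookkeeping sketched above. The reduction in the first paragraph, on the other hand, is routine once \eqref{fgdar} is at hand.
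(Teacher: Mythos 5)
Your proof is correct and takes essentially the same route as the paper's: reduce, via the product structure coming from \eqref{fgdar}, to estimating the area of the planar $r^a$-neighbourhood of the arc $\{e^{is}v : \betrag{s}\le r\}$, and then carry out that area estimate by elementary geometry. The paper handles the planar estimate pictorially (equation \eqref{flab} together with Figure~\ref{fig:2}) and then treats $n\ge3$ as a product, while you do a single Fubini reduction and make the case analysis explicit in polar coordinates, but the substance is identical.
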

\begin{proof}
As a first step, we consider the case $n=2$.
\begin{figure*}        
 \includegraphics[width=0.32\textwidth]{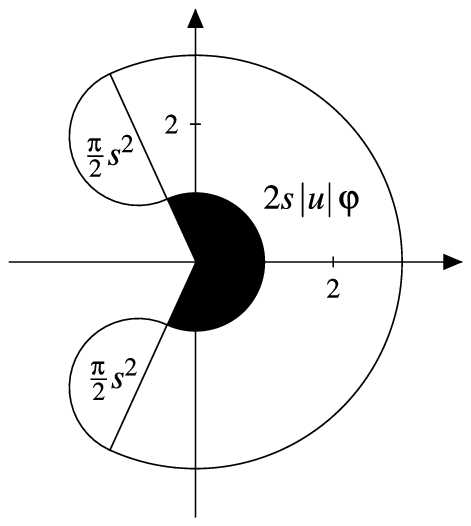} 
\includegraphics[width=0.32\textwidth]{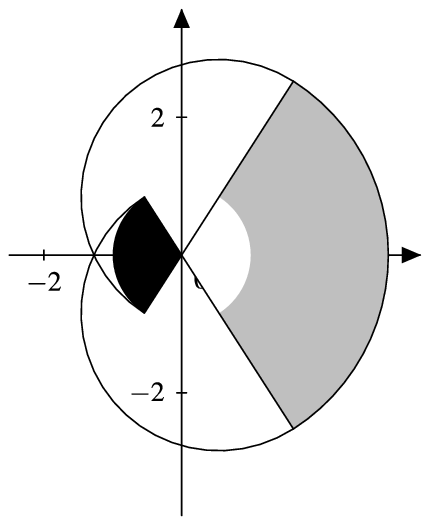}
\includegraphics[width=0.32\textwidth]{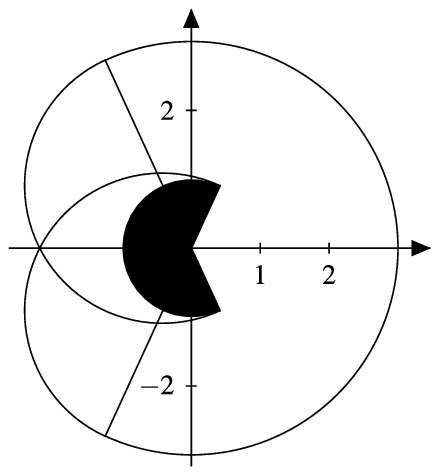}
\caption{The sets $B'(1,2,2)$,\ \  $B'(2,1,1)$ and \ $B'(2,2,1)$. The size of the black areas is $\pi(\betrag{u}-s)^2 \frac{\min\{2\pi,\phi\}}{2\pi}$, see equation \eqref{flab}.}
\label{fig:2}
\end{figure*}
For $v\in\C$, $\phi\in\R$, and $r>0$ let 
\[  B'(s,\phi,v) \dd \bigcup_{\alpha\in[-\phi,\phi]} \menge{z}{\norm{z}_2<s}+e^{i\alpha}v. \]
By some calculations corresponding  to Fig. \ref{fig:2}, we see that 
\begin{equation}\label{flab}
\begin{split}
\leb(B'(s,\phi,v))&\le \pi s^2+ \pi(\betrag{v}+s)^2\frac{\min\{2\pi,\phi\}}{2\pi}-\pi(\betrag{v}-s)^2 \frac{\min\{2\pi,\phi\}}{2\pi} \\
&= \pi s^2+ 2 s\cdot\betrag{v}\min\{2\pi,\phi\}\\
&\le 2 \leb(B'(s,\phi,v)).\end{split} \end{equation}
Equality holds in the first two lines of \eqref{flab} if $s<\betrag{v}$ and $\phi<\frac{\pi}{2}$.
Setting $s\dd r^a$ and $\phi=r$, \eqref{flab} yields 
\begin{equation} \label{bsformel}
\leb(\buu_r(y))=\leb(B'(r^a,r,v))\simeq\pi r^{2a}+ 2 r^a\cdot\betrag{v}\min\{2\pi,r\},
\end{equation} so we are done with the case $n=2$.
Now let $n\ge 3$. Since
\[ 
\buu_r(y)=B'(r^a, r, y)\times \prod_{j=3}^n (y_j-r^{a_j},y_j+r^{a_j})
\]
we have
\[ \begin{split}
\mu(\buu_r(y))&=\mu(B'(r^a, r, y))\cdot (2r)^{Q-2a} \\
& \simeq \left( r^{2a}+  r^a\cdot\betrag{v}\min\{1,r\} \right) r^{Q-2a},
\end{split}\]
which completes the proof.
\end{proof}

The following Lemma roughly says that in $G_1$, the volume of the balls $\buu$ grows at least as in spaces of dimension $Q-a$.  
\begin{lemma}\label{volwachstum}
There is a constant $C$ such that for any $y\in G_1$, $r\ge 0$ and $j\in\N$ we have
\begin{equation*}
\leb(\buu_{2^jr}(y))\ge C 2^{j(Q-a)}\leb(\buu_r(y)).
\end{equation*}
\end{lemma}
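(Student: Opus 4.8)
The plan is to use the explicit volume formula from Lemma \ref{volbal}, namely $\leb(\buu_r(y))\simeq r^Q+r^{Q-a}\betrag{v}\min\{1,r\}$, and to verify the inequality separately according to the size of $r$ relative to $1$ and the size of $\betrag{v}$. Write $V(r)\df \leb(\buu_r(y))$; by Lemma \ref{volbal} there are constants $c_1,c_2>0$ with $c_1 g(r)\le V(r)\le c_2 g(r)$ where $g(r)\df r^Q+r^{Q-a}\betrag{v}\min\{1,r\}$. It therefore suffices to show $g(2^jr)\ge C' 2^{j(Q-a)}g(r)$ for a constant $C'$ independent of $y,r,j$, since then $V(2^jr)\ge c_1 g(2^jr)\ge c_1 C' 2^{j(Q-a)}g(r)\ge (c_1 C'/c_2) 2^{j(Q-a)} V(r)$.

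First I would dispose of the pure-power term: since $Q\ge Q-a$ (because $a>0$) and $2^j\ge 1$, we have $(2^jr)^Q\ge 2^{j(Q-a)}r^Q$, so the $r^Q$ part of $g$ already satisfies the desired bound with constant $1$. It remains to control the cross term $r^{Q-a}\betrag{v}\min\{1,r\}$ after scaling; that is, I want $(2^jr)^{Q-a}\betrag{v}\min\{1,2^jr\}\ge C' 2^{j(Q-a)}\,r^{Q-a}\betrag{v}\min\{1,r\}$, which after cancelling $2^{j(Q-a)}r^{Q-a}\betrag{v}$ reduces to $\min\{1,2^jr\}\ge C'\min\{1,r\}$. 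This last inequality is immediate with $C'=1$ since $2^jr\ge r$ implies $\min\{1,2^jr\}\ge\min\{1,r\}$. Thus each of the two summands of $g(2^jr)$ dominates $2^{j(Q-a)}$ times the corresponding summand of $g(r)$, and adding the two bounds gives $g(2^jr)\ge 2^{j(Q-a)}g(r)$.

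Combining, $V(2^jr)\ge c_1 g(2^jr)\ge c_1 2^{j(Q-a)} g(r)\ge (c_1/c_2)\,2^{j(Q-a)} V(r)$, so the claim holds with $C=c_1/c_2$. The case $r=0$ is trivial since both sides vanish. There is essentially no obstacle here: the only mild point is to make sure one is comparing the genuine volume to the model function $g$ via Lemma \ref{volbal} before doing the elementary monotonicity estimates, and to observe that the exponent $Q-a$ — not $Q$ — is forced precisely by the cross term, which grows only like $(2^jr)^{Q-a}$ when $2^jr\ge 1$ while $r\le 1$. That regime (small $r$, large $2^jr$) is the one that makes the bound sharp, and it is exactly the content of the informal remark preceding the lemma that the quasi-balls behave like balls in a space of homogeneous dimension $Q-a$ for small radii.
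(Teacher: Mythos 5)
Your proof is correct and follows essentially the same route as the paper: both invoke Lemma \ref{volbal} to reduce to the model function $g(r)=r^Q+r^{Q-a}\betrag{v}\min\{1,r\}$, then observe that $(2^jr)^Q\ge 2^{j(Q-a)}r^Q$ (since $a>0$) and $\min\{1,2^jr\}\ge\min\{1,r\}$ to bound each summand. The only cosmetic difference is that the paper factors $2^{j(Q-a)}$ out of $g(2^jr)$ first, whereas you verify the two summands separately; the underlying inequalities are identical.
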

\begin{proof}
\end{proof} Using Lemma \ref{volbal} we obtain the estimate
\begin{equation}
\begin{split}
\leb(\buu_{2^jr}(y))&\simeq   (2^jr)^Q+  (2^jr)^{Q-a}\betrag{v}\min\{1,2^jr\}   \\
&= 2^{j(Q-a)} \left(  2^{ja} r^Q+  r^{Q-a}\betrag{v}\min\{1,2^jr\} \right)  \\
&\ge 2^{j(Q-a)} \left(  r^Q+  r^{Q-a}\betrag{v}\min\{1,r\} \right)  \\
&\simeq C 2^{j(Q-a)}\leb(\buu_r(y)).
\end{split}
\end{equation}
\begin{lemma}\label{dichte} There is a constant $C_1$ such that 
for all $R\ge1,\, r>0$ and $x,y\in G_1$ we have the estimate
\begin{equation}\label{diab} 
\int_r^{2r} 1_{\menge{t}{\betrag{x-\ortn y}<Rt}}\dht \le C_1 
\frac{R^ar^Q}{\leb(\buu_r(y))}. 
 \end{equation}
Furthermore, for any $p>a-Q$ there is a constant $C_2$ such that
\begin{equation}\label{intabpartb}
 \int_r^\infty t^{-Q-p}\cdot 1_{\menge{t}{\betrag{x-\ortn y}<Rt}}\dht 
\le C_2 \frac{R^ar^{-p}}{\leb(\buu_r(y))} \end{equation}
for all $r>0$ and all $x,y\in G_1$. These statements are also true when $\ortp$ is substituted for $\ortn$ or when $x$ and $y$ are exchanged on one side.
\end{lemma}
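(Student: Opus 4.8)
The plan is to reduce the whole estimate to a one–dimensional geometric count. Writing $x=(u,x')$, $y=(v,y')$ and using the homogeneous norm \eqref{hnorm1}, the inequality $\betrag{x-\ortn y}<Rt$ is equivalent to $\betrag{u-e^{-it}v}_\C<(Rt)^a$ \emph{together with} $\betrag{x_j-y_j}<(Rt)^{a_j}$ for $j\ge 3$. Since only upper bounds are wanted, I would discard the last $n-2$ constraints, so that it suffices to estimate $\int_r^{2r}1_E\dht$ with $E\df\menge{t\in[r,2r]}{\betrag{u-e^{-it}v}_\C<(Rt)^a}$. By the upper bound in Lemma \ref{volbal}, $\leb(\buu_r(y))\le C\,r^Q(1+A)$ with $A\df r^{-a}\betrag{v}\min\{1,r\}$; hence \eqref{diab} will follow once I show $\int_r^{2r}1_E\dht\le C\,R^a/(1+A)$ for some constant $C$.

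The geometric input is an elementary fact about a phase sweeping past a disk: for $v\neq 0$ and $0<\delta<\betrag{v}/2$, the condition $\betrag{u-e^{i\tau}v}_\C<\delta$ means that $e^{i\tau}$ lies in the Euclidean disk of radius $\delta/\betrag{v}<1/2$ about $u/v$, and the intersection of such a disk with the unit circle is, if nonempty, an arc of angular length $\lesssim\delta/\betrag{v}$ (estimate the chord it subtends). Consequently, for any interval $I\subset\R$, the set $\menge{\tau\in I}{\betrag{u-e^{i\tau}v}_\C<\delta}$ has Lebesgue measure $\lesssim(1+\betrag{I})\,\delta/\betrag{v}$, the factor $1+\betrag{I}$ counting how often the moving phase passes that arc.

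Next I would split on the size of $A$. If $A\le 2^{a+1}R^a$, then $R\ge 1$ forces $R^a/(1+A)\ge 2^{-(a+2)}$, and the trivial bound $\int_r^{2r}1_E\dht\le\log 2\lesssim R^a/(1+A)$ already suffices. If $A>2^{a+1}R^a$, then $\betrag{v}>2^{a+1}(Rr)^a$, so $(Rt)^a\le(2Rr)^a<\betrag{v}/2$ for every $t\in[r,2r]$; applying the geometric fact to $\tau=-t$ with $\delta=(2Rr)^a$ and $\betrag{I}=r$ gives
\[ \int_r^{2r}1_E\dht\ \le\ \frac1r\,\betrag{E}\ \lesssim\ \frac{\max\{1,r\}}{r}\cdot\frac{(Rr)^a}{\betrag{v}}\ =\ \frac{R^a}{A}\ \le\ \frac{2R^a}{1+A}, \]
the last step because $A>1$. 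This proves \eqref{diab}. The variant with $\ortp$ in place of $\ortn$ is identical (the count does not see the sign of the rotation angle), and the variant in which $x$ and $y$ are interchanged in the indicator reduces to it through $\betrag{y-\ortn x}=\betrag{x-\ortp y}$, valid because $G_1$ is abelian and $\ort{t}$ acts on the $\C$-coordinate by $e^{\pm it}$.

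For \eqref{intabpartb} I would split $[r,\infty)=\bigcup_{j\ge 0}[2^jr,2^{j+1}r)$ dyadically, bound $t^{-Q-p}\le(2^jr)^{-Q-p}$ on the $j$-th piece, apply \eqref{diab} with $r$ replaced by $2^jr$, and then invoke Lemma \ref{volwachstum}:
\[ \int_{2^jr}^{2^{j+1}r}t^{-Q-p}\,1_{\{\,\cdot\,\}}\dht\ \le\ (2^jr)^{-Q-p}\cdot\frac{C_1R^a(2^jr)^Q}{\leb(\buu_{2^jr}(y))}\ \le\ \frac{C\,R^a r^{-p}}{\leb(\buu_r(y))}\,2^{-j(p+Q-a)}; \]
summing over $j$ converges precisely because $p>a-Q$, which yields \eqref{intabpartb}, and the stated variants follow as above. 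The only real difficulty is the reduction to the planar count and, within it, keeping track of the factor $\min\{1,r\}$ so that the bound matches $\leb(\buu_r(y))$ from Lemma \ref{volbal} exactly — this is what dictates the threshold $A\lessgtr 2^{a+1}R^a$ in the case split; part (b) is then a routine dyadic summation controlled by the volume growth of Lemma \ref{volwachstum}.
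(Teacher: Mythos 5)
Your proof is correct and follows essentially the same route as the paper: reduce to a planar arc--length estimate for the $\C$--component, combine with the volume formula of Lemma~\ref{volbal} to absorb the factor $\min\{1,r\}$, and then handle \eqref{intabpartb} by a dyadic decomposition together with Lemma~\ref{volwachstum}. The only cosmetic difference is that you split on the size of $A=r^{-a}\betrag v\min\{1,r\}$ relative to $R^a$, while the paper splits on whether $\leb(\buu_r(y))$ is comparable to $r^Q$ and proves an intermediate bound (its \eqref{assump1}); the geometric content and the roles of Lemmas~\ref{volbal} and~\ref{volwachstum} are the same.
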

\begin{proof}
For some constants $C_1,C_2$ we have $\leb(\buu_r(y))\le  C_1r^Q+  C_2r^{Q-a}\betrag{v}\min\{1,r\}$ by Lemma \ref{volbal}.
If $\leb(\buu_r(y))< 2C_1r^Q$, then we are done with \eqref{diab} because the left side of \eqref{diab} is bounded by a constant. Otherwise we have $\leb(\buu_r(y))\ge 2C_1r^Q$ and
\[ \leb(\buu_r(y)) \le 2C_2 r^{Q-a}\betrag{v}\min\{1,r\}, \]
where $C_2$ does not depend on $y$ or $r$. Note that $v\neq 0$.
Assume for the moment that
\begin{equation}\label{assump1}
\int_r^{2r} 1_{\menge{t}{\betrag{x-\ortn y}<Rr}}\diff t \lesssim \frac{R^a r^{a+1}}{\betrag{v} \min\{ r,1\}}. 
\end{equation}
Then, with $2R$ substituted for $R$, we get
\begin{equation*}\begin{split}
\int_r^{2r} 1_{\menge{t}{\betrag{x-\ortn y}<Rt}}\dht &\le r^{-1}\int_r^{2r} 1_{\menge{t}{\betrag{x-\ortn y}<R2r}}\diff t \\
& \lesssim\frac{(2R)^ar^{Q}}{r^{Q-a}\betrag{v} \min\{ r,1\}}\\
&\lesssim \frac{R^a r^Q}{\leb(\buu_r(y))}, 
\end{split}\end{equation*}
and the proof of \eqref{diab} would be finished. So let us prove assumption \eqref{assump1}.
 Note that by  \eqref{hnorm1}
we have 
\[\betrag{x-\ortn y}<Rr\quad \folgt\quad  \betrag{u-e^{-it}v}^{\frac{1}{a}}_\C< Rr. \]
For any $0<r<r'\le r+2\pi$, it follows with the help of Fig. \ref{fig:3} that
\begin{equation}\begin{split} \label{ea}
\int_r^{r'} 1_{\menge{t}{\betrag{x-\ortn y}<Rr}}  \diff t
&\le \leb \menge{t\in[0,2\pi]}{\  \betrag{u-e^{-it}v} <(Rr)^{a}}\\
&= \leb \menge{t\in[0,2\pi]}{\  \betrag{\frac{u}{\betrag{v}}-e^{-it}\frac{v}{\betrag{v}}}<\frac{ (Rr)^a}{\betrag{v}}}\\
& \le2\pi\frac{(Rr)^a}{\betrag{v}}\\
& \lesssim\frac{R^a r^a}{\betrag{v}}.\\
\end{split}\end{equation}
\begin{figure*}        
 \includegraphics[width=0.6\textwidth]{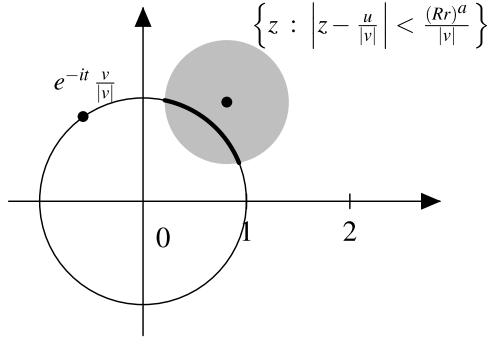} 
\caption{The length of the thick line is smaller than the circumference of the shaded area, see \eqref{ea}.}
\label{fig:3}
\end{figure*}
If $r<1$, then \eqref{assump1} follows from \eqref{ea} with $r'=2r$ because their right hand sides are equal. Otherwise, when $r\ge 1$, we decompose the integral into the sum
\[\begin{split}  
\int_r^{2r}  1_{\menge{t}{\betrag{x-\ortn y}<Rr}}\diff t 
&\le \sum_{k=0}^{\lfloor r\rfloor}  \int_{r+k}^{r+k+1} 1_{\menge{t}{\betrag{x-\ortn y}<Rr}}\diff t \\
&\lesssim \sum_{k=0}^{\lfloor r\rfloor} \frac{R^a\ (r+k)^a}{\betrag{v}}\\
&\lesssim  \frac{R^a r^{a+1}}{\betrag{v}},
 \end{split} \]
 which proves  \eqref{assump1} and finishes the proof of \eqref{diab}. 
A  dyadic decomposition of the interval $]0,\infty[$,  equation \eqref{diab} and Lemma \ref{volwachstum} now yield 
\[\begin{split} 
\int_r^\infty t^{-Q-p}\cdot 1_{\menge{t}{\betrag{x-\ortn y}<Rt}}\dht 
&=\sum_{j=0}^\infty \int_{2^jr}^{2^{j+1}r} t^{-Q-p}\cdot 1_{\menge{t}{\betrag{x-\ortn y}<Rt}}\dht \\
&\le \sum_{j=0}^\infty   (2^{j} r)^{-Q-p} \cdot C_1\cdot \frac{R^a (2^j r)^Q}{\leb(\buu_{2^j r}(y))} \\
&= C_1 R^a r^{-p}\sum_{j=0}^\infty \frac{ 2^{-jp}}{{\leb(\buu_{2^j r}(y))}} \frac{\leb(\buu_r(y))}{\leb(\buu_r(y))}  \\
&\lesssim \frac{R^a r^{-p}}{\leb(\buu_r(y))} \sum_{j=0}^\infty 2^{-jp}\cdot 2^{j(a-Q)}  \\
&\lesssim \frac{R^a r^{-p}}{\leb(\buu_r(y))},
\end{split}\]
which proves \eqref{intabpartb}. 
\end{proof}
\section{Calder\'on-Zygmund kernels} \label{section:standardkernels}
There is a number of related notions of "standard" Calder\'on-Zygmund kernels in the context of homogeneous spaces, see \cite[p. 29]{stein1}\cite{han}\cite{hofmann}.  We say that $K$ is a Calder\'on-Zygmund kernel  with respect to a quasi-metric $d$, corresponding quasi-balls $\buu$ and a measure $\leb$,  if  there exist constants $k>1$, $\epsilon>0$, and $C$ such that
for all $x,y,\bar y$ the following estimates hold:
\begin{equation} \label{czprop1}
\betrag{K(x,y)}\le \frac{C}{\leb(\buu_{d(x,y)}(y))} \quad\text{and}
\end{equation}
\begin{equation} \label{czprop2}\begin{split} 
\betrag{K(y,x)-K(\bar y,x)}+\betrag{K( x, y)-K(x,\bar y)} \\
\le C\left(\frac{d(\bar y, y)}{d(x,y)}\right)^\epsilon \cdot\frac{1}{\leb(\buu_{d(x,y)}(y))}
\end{split} \qquad 
\text{ if } \ \  k\cdot d(y,\bar y)<d(x,y). \end{equation}
There are hints that the kernels introduced in Sect. \ref{sec:kern} are of this type.
We examine an example in the setting made up in Sect. \ref{sec:example}.
\begin{theorem} Let $G=G_1$, $D$, $d$, and $O$ as in Sect. \ref{sec:example} and let $K$ be a kernel $K_\psi$, where $\psi(\ortp x)=\psi(x)$, $\supp\psi\subseteq B_1$ and $\int\psi=0$, as defined in Sect. \ref{sec:kern}. Then there exist $k$, $c$, $\epsilon$, and $C$ such that $K$ is a Calder\'on-Zygmund kernel in the sense of \eqref{czprop1} and \eqref{czprop2}.
\end{theorem}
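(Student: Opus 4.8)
The plan is to deduce \eqref{czprop1} and \eqref{czprop2} from Lemma \ref{dichte}, exploiting that the compact support of $\psi$ localizes the $t$–integral defining $K_\psi$. Since $G_1$ is abelian, $K_\psi(x,y)=\int_0^\infty\psi_t(x-\ortn y)\dht$, and $\supp\psi_t\tm B_t$ forces the integrand to vanish unless $\betrag{x-\ortn y}<t$; in that case $x\in\buu_t(y)$, hence $t\ge d(x,y)$. So the integral effectively runs over $[d(x,y),\infty)$ carrying the indicator $1_{\{\betrag{x-\ortn y}<t\}}$, precisely the form to which Lemma \ref{dichte} applies. Moreover, from $\psi(\ortp x)=\psi(x)$ one checks that $\psi^\ast(z)\dd\ov{\psi(z^\me)}$ again lies in $\schwartz$, is supported in $B_1$, has vanishing integral and is rotation invariant, and that $\ov{K_\psi(y,x)}=\int_0^\infty\psi^\ast_t(x-\ortp y)\dht$. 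Hence the first summand in \eqref{czprop2} is of the same type as the second but with $\ortn$ replaced by $\ortp$ and $\psi$ by $\psi^\ast$, a case also covered by Lemma \ref{dichte}; so it suffices to bound $\betrag{K_\psi(x,y)}$ and $\betrag{K_\psi(x,y)-K_\psi(x,\bary)}$.

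For \eqref{czprop1}, using $\betrag{\psi_t}\le\norm{\psi}_\infty t^{-Q}$ and the localization,
\[
\betrag{K_\psi(x,y)}\le\norm{\psi}_\infty\int_{d(x,y)}^\infty t^{-Q}\,1_{\{\betrag{x-\ortn y}<t\}}\dht .
\]
Since $Q=2a+\sum_{j=3}^n a_j>a$, we have $0>a-Q$, so \eqref{intabpartb} applies with $p=0$, $R=1$, $r=d(x,y)$ and yields $\betrag{K_\psi(x,y)}\le C\norm{\psi}_\infty\,\leb(\buu_{d(x,y)}(y))^\me$, which is \eqref{czprop1}.

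For \eqref{czprop2}, fix $x,y,\bary$ with $k\,d(y,\bary)<d(x,y)$ and choose, as in the proof of Theorem \ref{intab}, a number $s$ with $\betrag s\lesssim\delta$ and $\betrag{\ortps y-\bary}<\delta$, where $\delta\simeq d(y,\bary)$; we may assume $k\ge3$ and $\betrag s\le t/2$ whenever $t\ge d(x,y)$. Because $\supp\psi\tm B_1$ and $d(x,y),d(x,\bary)>3\delta$, the integrands defining $K_\psi(x,y)$ and $K_\psi(x,\bary)$ vanish for $t\le 2\delta$, so $K_\psi(x,y)=K^{2\delta-s}_\psi(x,y)$ and $K_\psi(x,\bary)=K^{2\delta}_\psi(x,\bary)$. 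Substituting $t\mapsto t-s$ in the former and inserting the functions $a,b,c,d$ from the proof of Theorem \ref{intab}, we obtain
\[
\betrag{K_\psi(x,y)-K_\psi(x,\bary)}\le\int_{2\delta}^\infty\betrag{a-b}\,\mathrm dt+\int_{2\delta}^\infty\betrag{b-c}\,\mathrm dt+\int_{2\delta}^\infty\betrag{c-d}\,\mathrm dt .
\]
In each integrand the support of $\psi$ again forces $t\gtrsim d(x,y)$ and inserts an indicator $1_{\{\betrag{x-\ortn\bary}<Ct\}}$ (with $C\ge1$), while the differences are bounded by $C\norm{\psi}_{(N)}$ times $t^{-Q-1}(\delta/t)^{\gamma}$, $t^{-Q-1}(\delta/t)$ and $t^{-Q-1}(\delta/t)^{\gamma/\Gamma}$, respectively: the first uses Lemma \ref{mws}, the orthogonality of $\ortp$ and \eqref{normab} (the two arguments of $\psi$ differ by $\dil_{t-s}^\me(\ortps y-\bary)$, of homogeneous norm $\lesssim\delta/t<1$); the second uses $\betrag{(t-s)^{-Q-1}-t^{-Q-1}}\lesssim\betrag s\,t^{-Q-2}$; the third uses Lemma \ref{tecnical} (applicable since $\betrag s\le t/2$) together with \eqref{normab}. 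Since $\gamma,1,\gamma/\Gamma$ are all positive, hence $>a-Q$, estimate \eqref{intabpartb} with $r\simeq d(x,y)$, $R=C$ and base point $\bary$ turns each $t$–integral into $C\norm{\psi}_{(N)}(\delta/d(x,y))^{\epsilon_i}\,\leb(\buu_{c\,d(x,y)}(\bary))^\me$, with $\epsilon_1=\gamma$, $\epsilon_2=1$, $\epsilon_3=\gamma/\Gamma$. Finally $d(x,\bary)\simeq d(x,y)$ and $\bary\in\buu_\delta(y)$ with $\delta\lesssim d(x,y)$, so the doubling and engulfing properties (Theorem \ref{homraum}) give $\leb(\buu_{c\,d(x,y)}(\bary))\simeq\leb(\buu_{d(x,y)}(y))$. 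Summing and setting $\epsilon\dd\min\{\gamma,1,\gamma/\Gamma\}>0$ (bounding $\delta/d(x,y)<1/k<1$ by its smallest power) proves \eqref{czprop2}.

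The main obstacle is this pointwise version of Theorem \ref{intab}: one must carry the localization indicator $1_{\{\betrag{x-\ortn\bary}<Ct\}}$ through the $a$–$b$–$c$–$d$ decomposition instead of integrating in $x$ first, check that each of the three differences genuinely gains a positive power of $\delta/t$, and keep track of the comparison constants when replacing $\leb(\buu_{c\,d(x,y)}(\bary))$ by $\leb(\buu_{d(x,y)}(y))$ via doubling and engulfing. Once these steps are in place the remaining computations are routine, and they also give $C\lesssim\norm{\psi}_{(N)}$ for a suitable Schwartz norm.
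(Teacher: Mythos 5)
Your proof is correct and follows the paper's strategy in its essentials: localize the $t$--integral by $\supp\psi\subseteq B_1$, apply Lemma \ref{dichte} for \eqref{czprop1}, shift $t\mapsto t-s$ and telescope into the three differences $a-b$, $b-c$, $c-d$ (the paper's $f_1,f_2,f_3$), integrate each against the appropriate power of $t$ via \eqref{intabpartb}, and compare volumes by doubling and engulfing. Two places differ, both harmlessly. First, to reduce $\betrag{\ov{K_\psi(y,x)}-\ov{K_\psi(\bar y,x)}}$ to a term of the same type as $\betrag{K_\psi(x,y)-K_\psi(x,\bar y)}$, you pass to $\psi^\ast$ and check its properties; the paper simply observes from $\psi\krng O_t=\psi$ that $K(\bar y,x)=\int_\Delta^\infty\psi_t(O_t\bar y-x)\dht$, which is slightly more direct but equivalent. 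Second, and more substantively, for the third integrand $c-d$ you invoke Lemma \ref{tecnical} together with \eqref{normab} and Lemma \ref{mws}, giving a gain of $(\delta/t)^{\gamma/\Gamma}$. The paper instead uses that $D_t^{-1}(x-O_{-t}\bar y)$ lies in a fixed compact set on the support of the integrand and estimates $\norm{(D_{1-s/t}^{-1}-\id)D_t^{-1}(\,\cdot\,)}_2\lesssim\betrag{s/t}$ by smoothness of $p\mapsto D_{1-p}^{-1}-\id$ at $p=0$; this yields the stronger gain $\delta/t$. Consequently the paper obtains $\epsilon=\min\{1,\gamma\}$, whereas your argument delivers $\epsilon=\min\{\gamma,1,\gamma/\Gamma\}=\gamma/\max\{1,\Gamma\}$, a smaller but still positive exponent. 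Since the theorem only claims the existence of some $\epsilon>0$, your proof establishes the statement; the paper's estimate for the third term is sharper but not needed for the conclusion.
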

\begin{proof} 
We choose some $k\ge 4$ such that under the condition $k\cdot d(y,\bar y)<d(x,y)$, we have 
 $d(x,y)\simeq d(x,\bar y)$ as in \eqref{distequiv} and furthermore $4d(y,\bar y)<d(x,\bar y)$.
Then it is sufficient to show \eqref{czprop1} and \eqref{czprop2} with $d(x,y)$ replaced by $\Delta=\min\{d(x,y), d(x,\bar y)\}$.  Note that if $x-\ortn y\in \supp\psi_t$ or $\ortp x-y\in \supp\psi_t $, then $\betrag{x-\ortn y}<t$ and consequently $d(x,y)<t$.
Thus we have
\begin{equation*}
K(x,y)=\int_\Delta^\infty \psi_t(x-\ortn y)\dht  
\end{equation*}
and  Lemma \ref{dichte} with $p=0$ yields
\begin{equation*}\begin{split}
\betrag{K(x,y)}&\le \betrag{\int_\Delta^\infty t^{-Q}\norm{\psi}_\infty \cdot 1_{\menge{t}{\betrag{x-\ortn y}<t}} \dht} \\
&\le C \norm{\psi}_\infty \frac{1}{\leb(\buu_\Delta(y))},
\end{split}
\end{equation*}
proving \eqref{czprop1}. We continue with the proof of \eqref{czprop2}, which is trivial if $y=\bar y$, so we assume $y\neq\bar y$.  Due to the rotational symmetry of $\psi$, we have
\begin{equation}\label{roty}
K(\bar y,x)=\int_\Delta^\infty \psi_t(\ortp \bar y-  x)\dht. 
\end {equation}
Setting $\delta=d(y,\bar y)$, the estimate $0<4\delta<\Delta$ holds, and it suffices to verify that 
\begin{equation} \label{cz2a}
\betrag{K( x, y)-K(x,\bar y)} 
\le C \left( \frac{\delta}{\Delta} \right)^\epsilon \cdot \frac{1}{\leb(\buu_{\Delta}(y))}
\end{equation}
and
\begin{equation} \label{cz2b}
\betrag{K(y,x)-K(\bar y,x)} 
\le C \left( \frac{\delta}{\Delta} \right)^\epsilon \cdot \frac{1}{\leb(\buu_{\Delta}(y))}. 
\end{equation}
Because of \eqref{roty}, \eqref{cz2b} can be shown with the same techniques as \eqref{cz2a}, so we prove \eqref{cz2a} only.
Choose some real number $s$, $\betrag{s}\le \delta$, such that $\betrag{\ort{s} y-\bar y}\le \delta$. 
Then $\betrag{s}\le\frac{1}{4}\Delta$ and
\[ \begin{split} 
&\betrag{K(x,y)-K(x,\bar y)}\\
&= \betrag{\int_{\Delta}^\infty t^{-Q-1}\left[\psi(D_t^{-1}(x-\ortn y )-\psi(D_t^{-1}(x-\ortn \bar y))\right]\diff t }\\
& = \betrag{\int_{\frac{3}{4}\Delta}^\infty  \left[ (t-s)^{-Q-1}\psi(D_{t-s}^{-1}(x- O_{-t+s}y) )-t^{-Q-1}\psi(D_t^{-1}(x-\ortn \bar y))\right]\diff t }\\
& \le \int_{\frac{3}{4}\Delta}^\infty  (t-s)^{-Q-1} \cdot\betrag{ \psi(D_{t-s}^{-1}(x-\ort{-t+s} y) )-\psi(D_{t-s}^{-1}(x-\ortn \bar y)) } \diff t \\
&\quad + \int_{\frac{3}{4}\Delta}^\infty  \betrag{ (t-s)^{-Q-1}- t^{-Q-1}} \cdot \betrag{ \psi(D_{t-s}^{-1}(x-\ortn \bar y) )} \diff t \\
&\quad+ \int_{\frac{3}{4}\Delta}^\infty   t^{-Q-1}\cdot\betrag{\psi(D_{t-s}^{-1}(x-\ortn \bar y) )- \psi(D_t^{-1}(x-\ortn \bar y)))}\diff t \\
&= \int_{\frac{3}{4}\Delta}^\infty f_1(t)\dht\ +\ \int_{\frac{3}{4}\Delta}^\infty f_2(t)\dht\ +\ \int_{\frac{3}{4}\Delta}^\infty f_3(t)\dht,
\end{split}\]
where $f_1$, $f_2$, and $f_3$ are the integrands in the preceding lines. They are supported in the sets

\[\begin{split} \supp f_1&\subseteq \menge{t>0}{ \betrag{x-\ortn \bar y }<2t\ \text{ or }\   \betrag{x-\ortn(\ort{s} y) }<2t} ,\\
\supp f_2,\ \supp f_3&\subseteq \menge{t>0}{\betrag{x-\ortn \bar y }<2t}. \end{split}\]
Lemma \ref{dichte} yields

 \begin{equation} \label{suppintab}\int_{\frac{3}{4}\Delta}^\infty t^{-Q-p} \cdot 1_{\{\supp f_j\}} \dht\ \lesssim \frac{\Delta^{-p}}{\mu(\buu_\Delta(x))}\lesssim \frac{\Delta^{-p}}{\mu(\buu_\Delta(y))} \kq j=1,2,3. \end{equation}
We now complete the proof by estimating the integrands $f_j$ against appropriate powers of $t$.
We have
\[ \begin{split} 
f_1(t)&=  (t-s)^{-Q} \cdot\betrag{  \psi(D_{t-s}^{-1}(x-\ort{-t+s} y) )-\psi(D_{t-s}^{-1}(x-\ortn \bar y)) } \\
&\lesssim \norm{\nabla \psi}_\infty t^{-Q} \norm{D_{t-s}^{-1}(\ort{s}y-\bar y)}_2  \\
&=\norm{\nabla \psi}_\infty t^{-Q} \norm{D_\frac{\delta}{t-s}D_\delta^{-1}(\ort{s}y-\bar y)}_2 \\
&\le\norm{\nabla \psi}_\infty  t^{-Q} \norm{D_\frac{\delta}{t-s}} \\
&\le C_1 \norm{\nabla \psi}_\infty \delta^\gamma t^{-Q-\gamma} ,
\end{split}\]
where $\gamma=\min\{ a,a_2,\ldots,a_n\}$.
Together with \eqref{suppintab} this implies the inequality
\begin{equation}\begin{split}\label{intf1}
&\int_{\frac{3}{4}\Delta}^\infty f_1(t)\dht\ \\
&\le C_1\norm{\nabla \psi}_\infty \delta^\gamma \int_{\frac{3}{4}\Delta}^\infty t^{-Q-\gamma}\cdot1_{\{\supp f_1\}}\dht  \\
&\le C_2\norm{\nabla \psi}_\infty \left(\frac{\delta}{\Delta}\right)^\gamma \frac{1}{\mu( \buu_\Delta(X))} .
\end{split}\end{equation}
In a similar way we obtain 
\[ \begin{split} 
f_2(t)&=\betrag{ (t-s)^{-Q-1}- t^{-Q-1}} \cdot \betrag{ \psi(D_{t-s}^{-1}(x-\ortn \bar y) )} t \\ 
& \le C \norm{\psi}_\infty \delta t^{-Q-1},\end{split}\]
\begin{equation}
\int_{\frac{3}{4}\Delta}^\infty f_2(t)\dht\ 
\le \norm{\nabla \psi}_\infty r \int_{\frac{3}{4}\Delta}^\infty t^{-Q-1}\cdot1_{\{\supp f_2\}}\dht\
 \le \norm{\psi}_\infty \frac{\delta}{\Delta} \cdot\frac{1}{\leb( \buu_\Delta(y))} .
\end{equation}
 Furthermore, we have the estimate
\[ \begin{split} 
f_3(t)&=t^{-Q}\cdot\betrag{\psi(D_{t-s}^{-1}(x-\ortn \bar y) )- \psi(D_t^{-1}(x-\ortn \bar y)} \\
&\le\norm{\nabla \psi}_\infty  t^{-Q}\cdot\norm{(D_{t-s}^{-1}-D_t^{-1})(x-\ortn \bar y) }_2 \\
&=\norm{\nabla \psi}_\infty \  t^{-Q}\cdot\norm{(D_{1-\frac{s}{t}}^{-1}-\id)D_t^{-1}(x-\ortn \bar y) }_2 \\
&\le\norm{\nabla \psi}_\infty  t^{-Q}\cdot\norm{(D_{1-\frac{s}{t}}^{-1}-\id)}.
\end{split}\]
Note that $p\mapsto D_{1-p}^{-1}-\id$ is smooth, mapping $0$ to $0$, and that $\betrag{\frac{s}{t}}\le\betrag{\frac{\delta}{t}}\le\frac{1}{3}$. So we find a constant such that
\[ 
f_3(t)\le C\norm{\nabla \psi}_\infty \delta  t^{-Q-1},
\]
and
\begin{equation}\label{intf3}
\int_{\frac{3}{4}\Delta}^\infty f_3(t)\dht\ 
\le \norm{\nabla \psi}_\infty \delta \int_{\frac{3}{4}\Delta}^\infty t^{-Q-1}\cdot1_{\{\supp f_3\}}\dht\ 
\le \norm{\nabla\psi}_\infty \frac{\delta}{\Delta} \cdot\frac{1}{\leb( \buu_\Delta(y))}.
\end{equation}
Adding  inequalities \eqref{intf1} - \eqref{intf3} and using $\frac{\delta}{\Delta}\le 1$, we obtain a constant C such that
\[\betrag{K(x,y)-K(x,\bar y)}\le C\cdot (\norm{\psi}_\infty+\norm{\nabla\psi}_\infty) \left(\frac{\delta}{\Delta}\right)^{\min\{1, \gamma\ \}} \cdot\frac{1}{\leb( \buu_\Delta(y))}.\]
\end{proof}
\begin{acknowledgements}
This article evolved from my diploma thesis \cite{blo}.
I would like to thank Prof. M\"uller (Kiel) and Prof. Ricci (Pisa) for their advice on the subject. The idea of choosing a homogeneous norm  adapted to the family of rotations as in \eqref{rotinvariance} is due to Prof. Ricci. 
\end{acknowledgements}
 
\end{document}